\documentclass[12pt,leqno]{article}
\usepackage{amssymb,amsmath,amsfonts,amsbsy, amsthm, xspace, latexsym, amscd, graphicx, fancyhdr,verbatim,enumitem, pinlabel}

\theoremstyle{colon}

\swapnumbers
\theoremstyle{definition}
\newtheorem{definition}{Definition}[section]
\newtheorem{remark}[definition]{Remark}

\newtheorem{example}[definition]{Example}
\newtheorem{examples}[definition]{Examples}

\theoremstyle{plain}
\newtheorem{theorem}[definition]{Theorem}

\newtheorem{lemma}[definition]{Lemma}
\newtheorem{corollary}[definition]{Corollary}

\newlist{T-enum}{enumerate}{2}
\newlist{L-enum}{enumerate}{2}
\newlist{C-enum}{enumerate}{2}
\newlist{P-enum}{enumerate}{2}  
\newlist{Pf-enum}{enumerate}{2} 
\newlist{D-enum}{enumerate}{2}
\newlist{Ex-enum}{enumerate}{2}
\newlist{Exs-enum}{enumerate}{2}
\newlist{E-enum}{enumerate}{2}
\newlist{R-enum}{enumerate}{2}
\setlist[T-enum,1]{label=(\roman*),format=\bfseries\emph,leftmargin=*,labelindent=.1\parindent}
\setlist[T-enum,2]{label=(\alph*),format=\bfseries\emph,leftmargin=*,labelindent=.1\parindent}
\setlist[L-enum,1]{label=(\roman*),format=\bfseries\emph,leftmargin=*,labelindent=.1\parindent}
\setlist[L-enum,2]{label=(\alph*),format=\bfseries\emph,leftmargin=*,labelindent=.1\parindent}
\setlist[C-enum,1]{label=(\roman*),format=\bfseries\emph,leftmargin=*,labelindent=.1\parindent}
\setlist[C-enum,2]{label=(\alph*),format=\bfseries\emph,leftmargin=*,labelindent=.1\parindent}
\setlist[P-enum,1]{label=(\roman*),format=\bfseries\emph,leftmargin=*,labelindent=.1\parindent}
\setlist[P-enum,2]{label=(\alph*),format=\bfseries\emph,leftmargin=*,labelindent=.1\parindent}
\setlist[Pf-enum,1]{label=(\roman*), leftmargin=*,labelindent=.1\parindent}
\setlist[Pf-enum,2]{label=(\alph*), leftmargin=*,labelindent=.1\parindent}
\setlist[D-enum,1]{label=\textbf{\arabic*.},leftmargin=*,labelindent=.2\parindent}
\setlist[D-enum,2]{label=\textbf{(\alph*)},leftmargin=*,labelindent=.1\parindent}
\setlist[Ex-enum,1]{label=\textbf{\arabic*.},leftmargin=*,labelindent=.15\parindent}
\setlist[Exs-enum,1]{label=\textbf{\arabic*.},leftmargin=*,labelindent=.15\parindent}
\setlist[E-enum,1]{label=\textbf{\arabic*.},leftmargin=*,labelindent=.15\parindent}
\setlist[R-enum,1]{label=\textbf{\arabic*.},leftmargin=*,labelindent=.15\parindent}
\newcommand{\bra}{\ensuremath{\langle}}
\newcommand{\ket}{\ensuremath{\rangle}}

\newcommand{\supp}{\ensuremath{{\rm{supp}}}}
\newcommand{\card}{\ensuremath{{\rm{card}\,}}}

\newcommand{\ran}{\ensuremath{{\rm{ran}}}}

\newcommand{\emb}{\ensuremath{\hookrightarrow}}

\newcommand{\ifff}{\ensuremath{{\rm{\,iff\;}}}}

\begin{document}
\title{Infinitesimals inside the Familiar Field of Complex Numbers}
\author{Todor D. Todorov\\
Emeritus of
	California Polytechnic State University\\
	San Luis Obispo, California 93407, USA\\
	ttodorov@calpoly.edu}
	\maketitle

\begin{abstract} \emph{Abstract}: We show that the field of complex numbers $\mathbb C$ contains non-zero infinitesimals by observing that $\mathbb C$ contains non-Archimedean subfields. Our observation is based on an old  theorem in algebra due to E. Steinitz, discussed in the article in detail. The presence of infinitesimals in $\mathbb C$ was surprise to the author and might be surprise to the readers as well, since $\mathbb C$ is commonly defined in terms of the field of reals $\mathbb R$, which is Archimedean. An additional intrigue arises from the fact that $\mathbb R$ was historically  introduced in 19-th century (by Dedekind, Cauchy and others) exactly to make infinitesimals in Leibniz-Newton infinitesimal calculus redundant. It seems that mathematics will never get rid of infinitesimals completely - they are all around us whether we like it or not. In the last section of the article we explain how our result fits to analysis, both standard and non-standard. With examples from history of calculus as well of  first-class recent achievements in analysis we try to convince the reader that presence of infinitesimals in analysis simplifies its formal language and improves its efficiency. The motivations of our research is related to an attempt to simplify the properties of particular algebras of generalized functions of Colombeau's type, shortly discussed in the text.
\end{abstract}

\emph{Keywords}: Totally ordered field; non-Archimedean field; real closed field; algebraically closed field; Artin-Schreier theorem; Steinitz theorem; Erd\"{o}s, Gillman, Henriksen theorem; Puiseux-Newton series; Levi-Civit\'{a} series; Hahn series; Robinson's non-standard numbers; Robinson's asymptotic numbers.

\emph{2020 Mathematis Subject Classification}: Primary: 46F30; Secondary:  26E35; 12D15; 12F20; 12L10; 13F25; 01A50.

\section{Introduction}\label{S: Introduction}

	\quad\;We show that the field of complex numbers $\mathbb C$ contains a non-Archimedean subfield $\mathcal R$ (Section~\ref{S: Infinitesimals in Algebra}). Thus the non-zero infinitesimals in $\mathcal R$ are also elements of $\mathbb C$. The field $\mathcal R$ determines an absolute value on $\mathbb C$, which is an (proper) extension of the usual one in the cases $\mathcal R$ contains a copy of $\mathbb R$. We discuss our result from purely algebraic point of view as well from the point of view of analysis (Section~\ref{S: Infinitesimals in Analysis}).

	The set-theoretical framework of our article (Section~\ref{S: Set-Theoretical Framework}) is the most common and popular axioms in set theory:  ZFC (Zermelo-Frankel Axioms with Axiom of Choice). We sometimes also involve (mostly for the sake of simplicity)  GCH (Generalized Continuum Hypothesis).
	
	
	The existence of embedding of $\mathcal R$ into $\mathbb C$ is a consequence of an old algebraic theorem due to E. Steinitz (Section~\ref{S: Steinitz Theorem}).  A reader who is familiar with Steinitz theorem might skip this section and jump to the next one. Actually, we show that $\mathbb C$ contains not one, but many, potentially infinitely many (mutually non-isomorphic), non-Archimedean fields $\mathcal R$ and we present a list of some of them (Examples~\ref{Exs: Non-Archimedean Real Closed Fields of Cardinality c}).
In Section~\ref{S: Uniqueness up to Isomorphism} we shortly discuss the isomorphism between some of these fields. In Section~\ref{S: kappa-Complex Numbers} we generalize the result of Section~\ref{S: Infinitesimals in Algebra} and show that not only $\mathbb C$, but all fields of $\kappa$-complex numbers also contains infinitesimals.  We also present a generalized version of fundamental theorem of algebra involving algebraically closed fields with arbitrarily large uncountable cardinality (Theorem~\ref{T: Generalized Fundamental Theorem of Algebra}). 

	In Section~\ref{S: Infinitesimals in Analysis} we discuss infinitesimals in $\mathbb C$ from the point of view of analysis, both standard and non-standard. We also mention  some more recent achievements of non-standard analysis, some of which, at least for now, remain without  standard counterpart (Remark~\ref{R: Standard vs. Non-Standard  Analysis}). Actually, one of the motivations of our research is related to an attempt to simplify the properties of particular algebras of generalized functions of Colombeau~\cite{jCol84a} type, shortly discussed at the end of (Examples~\ref{Exs: Non-Archimedean Real Closed Fields of Cardinality c}) and (Remark~\ref{R: Standard vs. Non-Standard  Analysis}).
	
	We have made efforts to make our text accessible for mathematicians with expertise in different branches of mathematics. For the purpose we present in Sections~\ref{S: Preliminaries: Ordered Fields}-\ref{S: Steinitz Theorem}  a generous list of definitions and facts (in a \emph{index-references} style) belonging to different branches of pure mathematics supplied with relevant citations. Many readers might decide to skip reading at least one of these sections (or browse quickly through). Also, some of the examples in 
Examples~\ref{Exs: Non-Archimedean Real Closed Fields of Cardinality c} have origin in non-standard analysis and we involve these examples  in our discussion in Section~\ref{S: Infinitesimals in Analysis}. The reader without background on non-standard analysis might very well skip the ``non-standard part of the text'', since our main result - infinitesimals in $\mathbb C$, does not depend on non-standard mathematics. 

	As far as we can detect, our observation, that $\mathbb C$ contains infinitesimals, is widely unknown, and it is very likely new. In particular,  the isomorphisms  $\mathbb C\cong{^*\mathbb C}\cong{^\rho\mathbb C}$ (Section~\ref{S: Infinitesimals in Algebra}) had certainly not been observed and exploited so far in mathematical research involving the fields  $\mathbb C$, ${^*\mathbb C}$ and ${^\rho\mathbb C}$ (Oberguggenberger \&Todorov~\cite{OberTod98},  Vernaeve~\cite{HVern03},  Todorov\&Vernaeve~\cite{TodVern08}, Todorov~\cite{tdTodAxioms10}-\cite{tdTodSteady2015}). 
	
	We can only speculate why mathematicians, who have been familiar with Steinitz theorem for a long time, have failed to notice the presence of infinitesimals in $\mathbb C$. Technically speaking, a such observation was possible soon after the publication of Steinitz theorem~\cite{eSteinitz1910}  in 1910, since the non-Archimedean field $\mathbb R\{t\}$ of Puiseux-Newton series was used by Newton in the distant 1736. And actually, no more is needed to make our observation. One possible explanation (shortly hinted in the abstract) is that infinitesimals have been out of fashion in modern mathematics for a long time. A student majoring mathematics nowadays might very well have never heard about the concept of infinitesimal. Why? The reason is rooted in history of calculus. In attempt to explain this, let us recall  how Leibniz derived, say, the formula $\frac{d}{dx}(x^3)=3x^2$. By ``definition'' $\frac{d}{dx}(x^3)$ is what the following algorithm produces:
\begin{description}
\item \emph{Step 1:} Let $x$ be a ``fixed ordinary quantity'' (a fixed real number in modern language) and $dx\not=0$ be ``non-zero quantity'' (a variable taking non-zero values in modern language). We calculate:
\[
\frac{(x+dx)^3-x^3}{dx}=\frac{x^3+3x^2dx+3xdx^2+dx^3-x^3}{dx}=3x^2+3xdx+dx^2.
\]
\item \emph{Step 2:} Let $dx=0$ and the result is: $
\frac{d}{dx}(x^3)=3x^2$.
\end{description}


Notice that modern computers nowadays calculate symbolically derivatives of real functions following similar algorithms  (Shamseddine, Berz~\cite{kShamseddine2010}). From the point of view of pure mathematics, however, there is clearly  \emph{lack of consistency} between Step 1 ($dx\not=0$) and Step 2 ($dx=0$), which might be phrased even as a \emph{infinitesimal paradox}: ``$dx\not=0$ and $dx=0$''. To avoid this paradox, Leibniz (Newton, Euler and others) introduced the concept of non-zero infinitesimals; $dx$ must be a non-zero infinitesimal, which allows dropping the term $3xdx+dx^2$, while keeping $3x^2$ on the right hand side of the above formula. Persistent doubts however, in the existence of non-zero infinitesimals (as a tool to avoid the above paradox), led historically to a  \emph{movement against infinitesimals, which gradually resulted to a purge of infinitesimals from mathematics} (Hall \&Todorov~\cite{HallTodLeibniz11}). The movement  started in 19-th century by Bolzano, Weirstrass and others and it is still lingering today in spite of the advent of non-standard analysis (Robinson~\cite{aRob66}) at the middle of 20-th century. For those interested in the pre-history of infinitesimals in the context of Reformation, Counter-Reformation and English Civil War, we recommend the excellent book by Alexander~\cite{aAlexander2014}.

	While dealing with infinitesimals in $\mathbb C$ we are facing the following \emph{philosophical alternative}:
\begin{enumerate}
\item Infinitesimals exist in $\mathbb C$ regardless whether we realized it or not - very much like  microbes swimming in a drop of water -  invisible for the naked eye. Perhaps, the absolute value $|\cdot|_{\mathcal R}$ (with non-Archimedean field $\mathcal R$) is our ``microscope'' under which the infinitesimal become ``visible"? 
\textbf{\large{or}}
\item $\mathbb C$ is free of infinitesmals (as it should be). We (humans) create infinitesimals in $\mathbb C$ by artificially embedding a non-Archimedean fields into $\mathbb C$?
\end{enumerate}

	As we know too well, mathematics is not equipped to answer any philosophical questions and we are not going to do that. Still, we somehow prefer to believe that infinitesimals do, indeed, exist in $\mathbb C$  (like microbes in a drop of water) and become observable  only for those who want to observe them. 

\section{Set-Theoretical Framework}\label{S: Set-Theoretical Framework}

\quad\; Our set-theoretical framework is the usual ZFC (Zermelo-Frankel Axioms with Axiom of Choice) along with the GCH (Generalized Continuum Hypothesis) in the form of $2^\kappa=\kappa_+$ for all cardinal numbers $\kappa$ (or equivalently, $2^{\aleph_\alpha}=\aleph_{\alpha+1}$ for all ordinals $\alpha$). Here $\kappa_+$ stands for the successor of $\kappa$. In particular case of $\kappa=\aleph_0$,  GCH reduces to CH (Continuum Hypothesis), which can be written in the form $\frak c=\aleph_1$, since $2^{\aleph_0}=\frak c$. Here $\aleph_0=\card(\mathbb N)$ and $\frak c=\card(\mathbb R)$. Due to the assumption of axiom of choice (AC) we have the formula $\kappa_1+\kappa_2 =\kappa_1\kappa_2=\max\{\kappa_1,\, \kappa_2\}$ for every two cardinal numbers $\kappa_1$ and $\kappa_2$, assuming in addition that at least one of them is infinite.

	Actually, we do not need GCH (neither CH) for our main observation - that $\mathbb C$ contains non-zero infinitesimals. We impose GCH mostly for the sake of convenience - to be able to establish an isomorphism between some of the fields in Examples~\ref{Exs: Non-Archimedean Real Closed Fields of Cardinality c} and to calculate more easily their cardinalities. However, we try to localize the dependence (if any) of our results on GCH or CH (or even on AC) whenever possible. Otherwise, no particular  expertise in set theory is expected from the reader.
\section{Preliminaries: Ordered Fields}\label{S: Preliminaries: Ordered Fields}

	\quad\; For convenience of the reader we present a list of basic definitions and facts about totally ordered fields. For more detailed expositions we refer to Lang~\cite{sLang},\, Hunderford~\cite{tHungerford} and  Waerden ~\cite{VanDerWaerden}. A reader, who is familiar with the topic, might jump directly to the next section. 
\begin{enumerate}
\item As usual, we denote $\mathbb N$, $\mathbb Q$, $\mathbb R$ and $\mathbb C=\mathbb R(i)$ the sets of natural, rational, real and complex numbers, respectively, and we also use $\mathbb N_0=\mathbb N\cup\{0\}$. If $S$ is a set, we denote by $\mathcal P(S)$ the power set of $S$.
\item If $\mathbb K$ is a totally ordered field (an ordered field) and $x\in\mathbb K$, we let $|x|=\max\{-x, x\}$, where $\max$ is relative to the order on $\mathbb K$. We say that
\begin{enumerate}
\item $x$ is \emph{finite} if $|x|\leq n$ for some $n\in\mathbb N$.
\item $x$ is \emph{infinitesimal} if $|x|< 1/n$ for all $n\in\mathbb N$.
\item $x$ is \emph{infinitely large} if $n<|x|$ for all $n\in\mathbb N$.
\end{enumerate}
Recall that every totally ordered field contains a copy of the field of rational numbers $\mathbb Q$. Thus both $n$ and $1/n$ are in $\mathbb K$ (justifying the above definition).
\item We denote by $\mathcal F$, $\mathcal I$ and $\mathcal L$ the set of the finite, infinitesimal and infinitely large numbers in $\mathbb K$, respectively. We also denote by $\mathcal U=\mathcal F\setminus\mathcal I$ the group of units of the ring $\mathcal F$. We observe that  $\mathcal F\cup\mathcal L=\mathbb K$,  $\mathcal F\cap\mathcal L=\varnothing$ and $\mathcal I\subset\mathcal F$. Moreover, if $x\in\mathbb K\setminus\{0\}$, then $x\in\mathcal I$ \ifff $1/x\in\mathcal L$. We sometimes write $x\approx y$ instead of $x-y\in\mathcal I$.
\item A totally ordered field $\mathbb K$ is called\, \emph{Archimedean} if  $\mathcal L=\varnothing$. The latter is equivalent to either $\mathcal I=\{0\}$,  or $\mathcal F=\mathbb K$.   Consequently, $\mathbb K$ is \emph{non-Archimedean} \ifff either of the following holds: $\mathcal L\not=\varnothing$, $\mathcal I\not=\{0\}$, or  $\mathcal F\not=\mathbb K$.
\item A totally ordered field $\mathbb K$ is \emph{algebraically saturated} if every nested sequence of open intervals in $\mathbb K$ has a non-empty intersection. More generally, $\mathbb K$ is \emph{algebraically $\kappa_+$-saturated} for some infinite cardinal $\kappa$ if every family of open intervals in $\mathbb K$ of cardinality $\kappa$ with the finite intersection property has a non-empty intersection. Moreover, $\mathbb K$ is \emph{fully saturated} if it is $\kappa_+$-saturated for ${\kappa_+}=\card(\mathbb K)$.
\item In addition to what was mentioned above, a totally ordered field $\mathbb K$ is\, \emph{Archimedean} if and only if $\mathbb K$ is an ordered subfield (can be embedded as such) of the field of real numbers $\mathbb R$. Consequently, every totally ordered field $\mathbb K$ which contains a proper copy of $\mathbb R$ (as an ordered subfield) is \emph{non-Archimedean}.  On the other hand, it is clear that an algebraically saturated ordered field must be non-Archimedean. Perhaps, the simplest example of a non-saturated non-Archimedean field, which does not contain a copy of $\mathbb R$, is given by the field $\mathbb Q(t)=\frac{\mathbb Q[t]}{\mathbb Q[t]}$ of rational functions in one variable with rational coefficients. First, we embed $\mathbb Q$ into $\mathbb Q(t)$ by $q\mapsto q\,t^0$ and convert $\mathbb Q(t)$ into a totally ordered field by ordering the ring of polynomials $\mathbb Q[t]$: A non-zero polynomial $P\in\mathbb Q[t]$ is positive if the coefficient in front of the lowest power of $t$ in $P$ is positive. For example, $2t^2-100t^3>0$, because $2>0$. We observe that $t,\,  t^2,\; 2t^2-100t^3,\; ,\dots$ are \emph{positive infinitesimals}. In particular, $t$ is a positive infinitesimal, because $\frac{1}{n}-t>0$ for all $n\in\mathbb N$. Similarly,  
$t^{-1},\; t^{-2}$, $\frac{1}{2t^2-100t^3},\dots$ are \emph{positive infinitely large} elements of $\mathbb Q(t)$.
\item A field $\mathcal R$ is \emph{formally real} (or \emph{orderable}) if $x_1^2+x_2^2+\dots+x_n^2=0$ always implies $x_1=x_2=\dots=x_n=0$. The fields $\mathbb Q$, $\mathbb R$ and $\mathbb Q(t)$ are formally real, while $\mathbb C$ is not. 
\item A formally real field $\mathcal R$ is \emph{real closed} if

\begin{enumerate} 
\item For every $a\in\mathcal R$ either $a$, or $-a$ has a square root.
\item Every polynomial $P\in\mathcal R[x]$ (with coefficients in $\mathcal R$) of \emph{odd degree} has a root in $\mathcal R$. 
\end{enumerate}
\item $\mathbb R$ is a real closed field, but  $\mathbb Q$ is not. Similarly, $\mathbb Q(t)$ is not real closed. More examples of real closed fields (Archimedean and non-Archimedean alike) appear later in the article (Examples~\ref{Exs: Non-Archimedean Real Closed Fields of Cardinality c}). 
\item Every real closed field $\mathcal R$ is a \emph{totally ordered field} under the unique ordering: $x\geq 0$ in $\mathcal R$ if $x=y^2$ for some $y\in\mathcal R$. Consequently, $\mathcal R$ is a field of zero-characteristic in the sense that $\mathcal R$ contains $\mathbb Q$ as a subfield.
\item   If $\mathcal R$ is a real closed field, then $\mathcal F/\mathcal I$ is an Archimedean real closed field. Let $\mathcal A$ be a subfield of $\mathcal R$, which is a maximal subring of $\mathcal F$ (a field of representatives). The existence of $\mathcal A$ is guaranteed by AC. Then $\mathcal A$ is an Archimedean real closed field as isomorphic to $\mathcal F/\mathcal I$. We refer to $\mathcal A$ as an \emph{Archimedean part of} $\mathcal R$ and sometimes use the notation $\mathcal A(\mathcal R)$ or $\mathcal A_\mathcal R$, since $\mathcal R$ determines $\mathcal A$ uniquely up to isomorphism.  Consequently, $\mathcal A$ and $\mathcal A(i)$ are  subfields of $\mathbb R$  and $\mathbb C$, respectively. Notice that if $\mathcal R$ is Archimedean, then $\mathcal A(i)=\mathcal R(i)$, since $\mathcal A=\mathcal R$. If, however, $\mathcal R$ a non-Archimedean, then the field extension $\mathcal R:\mathcal A$ must be transcendental hence, $\mathcal A\subsetneqq\mathcal R$ and $\mathcal A(i)\subsetneqq\mathcal R(i)$. 
\item Let $\mathcal R$ be a real closed field and $\mathcal R(i)$ obtained by adjoining $i$ to $\mathcal R$. We define $\mathcal R$-absolute value $|\cdot|_\mathcal R: \mathcal R(i)\mapsto\mathcal R(i)$ by the familiar formula $|x+iy|_\mathcal R=\sqrt{x^2+y^2}$, where $x, y\in\mathcal R$. We observe that for the range of the absolute value we have $\ran(|\cdot|_\mathcal R)=\{x\in\mathcal R: x\geq0\}$. It is clear that  $|\cdot|_\mathcal R$ possesses the usual properties of absolute value. Moreover, $|x|_\mathcal R=\max_\mathcal R\{-x, x\}$ for all $x\in\mathcal R$, where $\max_\mathcal R$ is relative to the order in $\mathcal R$. The topology on $\mathcal R(i)$ determined by $|\cdot|_\mathcal R$ coincides with the product topology on $\mathcal R(i)$ inherited from the order topology on $\mathcal R$. For more details we refer to Section~\ref{S: Infinitesimals in Analysis}.
\item  We have $|z|_\mathcal R=|z|$ for all $z\in\mathcal A(i)$, where $\mathcal A$ as an \emph{Archimedean part of} $\mathcal R$ and $|\cdot|$ stands for the usual absolute value in $\mathbb C$. Moreover, $|\cdot|_\mathcal R$ clearly reduces to the usual absolute value on $\mathbb C$ in the particular case $\mathcal R=\mathbb R$.
\item In contrast to the above, recall that a field $\mathbb K$ is \emph{algebraically closed} if every non-constant polynomial $P\in\mathbb K[x]$ (with coefficients in $\mathbb K$) has at least one root in $\mathbb K$. The field 
$\mathbb C$ is algebraically closed, but $\mathbb R$  and $\mathbb Q(t)$ are not.
\item   Let $\mathcal R$ is a formally real field. Then $\mathcal R$ is \emph{real closed} if and only if  $\mathcal R(i)$ is \emph{algebraically closed} (Artin and Schreier~\cite{ArtinSchreier27}).
 \item Let $\mathbb F$ be a subfield of $\mathbb K$ and suppose that field extension $\mathbb K : \mathbb F$ is transcendental. A subset $\mathcal B$ of $\mathbb K$ is called a \emph{transcendence basis} of $\mathbb K: \mathbb F$ if: 
(a) $\mathcal B$ is algebraically independent over $\mathbb F$; (b) The field extension
 $\mathbb K : \mathbb F(\mathcal B)$ is algebraic. The existence of $\mathcal B$ is guaranteed by Zorn lemma. The cardinality of $\mathcal B$ is called \emph{transcendence degree} of $\mathbb K\!: \!\mathbb F$ and usually denoted by ${\rm trd}(\mathbb K\!\!: \!\!\mathbb F)$, i.e.  ${\rm trd}(\mathbb K\!\!: \!\!\mathbb F)=\card\, \mathcal B$ (Hunderford~\cite{tHungerford}). Moreover, we have the formula:
\begin{equation}\label{E: card K}
\card(\mathbb K)=\max\{\aleph_0,\, \card\, \mathbb F,\, {\rm trd}(\mathbb K:\mathbb F)\}.
\end{equation}
Indeed, $\card(\mathbb K)=\card(\mathbb F(\mathcal B))$, since the field extension $\mathbb K\!:\!\mathbb F(\mathcal B)$ is algebraic by the definition of $\mathcal B$. On the other hand, $\card(\mathbb F(\mathcal B))=\max\{\aleph_0,\, \card\, \mathbb F,\, \card\, \mathcal B\}$ by the definition of $\mathbb F(\mathcal B)$, since $\mathcal B$ is algebraically independent over $\mathbb F$.
\item Let $\mathbb F$ be the \emph{prime subfield} of $\mathbb K$ (the smallest subfield of $\mathbb K$ containing the unit of $\mathbb K$) and suppose, as before, that the field extension $\mathbb K : \mathbb F$ is transcendental. In this case  the transcendence degree ${\rm trd}(\mathbb K: \mathbb F)$ is called \emph{absolute transcendence degree} of $\mathbb K$ and denoted, simply, by  ${\rm trd}(\mathbb K)$.

\end{enumerate}
\section{ Isomorphic Fields: Steinitz Theorem}\label{S: Steinitz Theorem}

	\quad\; We recall two classical theorems in algebra about isomorphisms between  algebraically closed and real closed fields. We also present several consequences from Steinitz theorem, which will lead us eventually to our main result in the next section. A reader who is familiar with Steinitz theorem might skip  this section and jump to the next one.
 \begin{theorem}[Steinitz]\label{T: Steinitz} Every two algebraically closed fields of the same characteristic and the same absolute transcendence degree are isomorphic. 
 \end{theorem}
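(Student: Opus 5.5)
The approach is the classical one: reduce to algebraic closures of purely transcendental extensions of the prime field, and use uniqueness of algebraic closure. Let $\mathbb K_1$ and $\mathbb K_2$ be algebraically closed fields of the same characteristic. Their prime subfields $\mathbb F_1$ and $\mathbb F_2$ are then isomorphic: both are copies of $\mathbb Q$ in characteristic zero, and both are copies of $\mathbb Z/p\mathbb Z$ in characteristic $p$. By Zorn's lemma (item on transcendence bases in Section~\ref{S: Preliminaries: Ordered Fields}) we choose transcendence bases $\mathcal B_1$ of $\mathbb K_1:\mathbb F_1$ and $\mathcal B_2$ of $\mathbb K_2:\mathbb F_2$. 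By hypothesis $\card\,\mathcal B_1=\card\,\mathcal B_2$, so there is a bijection $\beta:\mathcal B_1\to\mathcal B_2$. If the transcendence degree is zero, both bases are empty and the argument below still applies.

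The first step is to extend the isomorphism $\mathbb F_1\cong\mathbb F_2$ together with $\beta$ to a field isomorphism $\sigma:\mathbb F_1(\mathcal B_1)\to\mathbb F_2(\mathcal B_2)$. Algebraic independence of $\mathcal B_j$ means that the evaluation map from the polynomial ring $\mathbb F_j[X_b : b\in\mathcal B_j]$ into $\mathbb K_j$ is injective. Hence $\mathbb F_j(\mathcal B_j)$ is isomorphic to the rational function field in the variables indexed by $\mathcal B_j$. Renaming the variables via $\beta$ gives $\sigma$.

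The second step is to observe that $\mathbb K_j$ is an algebraic closure of $\mathbb F_j(\mathcal B_j)$. It is algebraically closed by assumption, and it is algebraic over $\mathbb F_j(\mathcal B_j)$ by the definition of a transcendence basis.

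The final step, which is the main one, invokes the uniqueness of algebraic closures: an isomorphism of fields $k_1\to k_2$ extends to an isomorphism between any algebraic closures $\overline{k_1}$ and $\overline{k_2}$. I would either cite this (Lang, Hungerford) or sketch it as follows.
\begin{itemize}
\item By Zorn's lemma, take a maximal pair $(E,\tau)$, where $E$ is an intermediate field $\mathbb F_1(\mathcal B_1)\subseteq E\subseteq\mathbb K_1$ and $\tau:E\to\mathbb K_2$ is an embedding extending $\sigma$.
\item Maximality forces $E=\mathbb K_1$. Otherwise, pick $a\in\mathbb K_1\setminus E$ with minimal polynomial $m$ over $E$. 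The polynomial $\tau(m)$ has a root in $\mathbb K_2$ because $\mathbb K_2$ is algebraically closed, so $\tau$ extends to $E(a)$, contradicting maximality.
\item The image $\tau(\mathbb K_1)$ is algebraically closed and contains $\mathbb F_2(\mathcal B_2)$. Since $\mathbb K_2$ is algebraic over $\mathbb F_2(\mathcal B_2)$, every element of $\mathbb K_2$ is a root of a polynomial that splits in $\tau(\mathbb K_1)$. Therefore $\tau$ is surjective.
\end{itemize}
This yields $\mathbb K_1\cong\mathbb K_2$.

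The only delicate point is this extension-and-surjectivity argument. The rest is bookkeeping. Note also that the use of AC enters only through the existence of transcendence bases and the Zorn argument; no GCH is needed.
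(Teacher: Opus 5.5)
Your argument is correct and is essentially the classical proof the paper relies on. The paper gives no argument of its own, only citing Steinitz and Hungerford (Ch.~VI, Thm.~1.12), whose proof does exactly this: match transcendence bases over the prime field to get $\mathbb F_1(\mathcal B_1)\cong\mathbb F_2(\mathcal B_2)$, then extend that isomorphism using the uniqueness of algebraic closures. One implicit ingredient should be stated: any two transcendence bases of $\mathbb K_j:\mathbb F_j$ have the same cardinality. This is what lets you take arbitrary bases and still conclude $\card\,\mathcal B_1=\card\,\mathcal B_2$ from the hypothesis.
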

 \begin{proof} For the original result, we refer to (Steinitz~\cite{eSteinitz1910}, p.125). For a more recent presentation we also mention (Hunderford~\cite{tHungerford}, Ch.VI, Thm.1.12, p.317).
\end{proof}
 	In some cases we can eliminate the concept of \emph{absolute transcendence degree} from the discussion.
 \begin{corollary}[Uncountable Cardinality] \label{C: Uncountable Cardinality} All algebraically closed fields of the same characteristics and the same uncountable cardinality are mutually isomorphic. 
 \end{corollary}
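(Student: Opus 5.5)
The plan is to reduce the corollary to Steinitz' theorem (Theorem~\ref{T: Steinitz}) by showing that, for an algebraically closed field of uncountable cardinality, the absolute transcendence degree equals the cardinality. So let $\mathbb K_1$ and $\mathbb K_2$ be algebraically closed fields of the same characteristic and the same uncountable cardinality $\kappa$. Let $\mathbb F$ denote their common prime subfield, which is $\mathbb Q$ or $\mathbb Z_p$ up to isomorphism, so $\card\,\mathbb F\leq\aleph_0$ in either case.

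The first step is to check that each extension $\mathbb K_j:\mathbb F$ is transcendental, as required for the definition of ${\rm trd}(\mathbb K_j)$. Suppose instead that $\mathbb K_j$ were algebraic over $\mathbb F$. Then every element of $\mathbb K_j$ would be a root of one of the countably many polynomials in $\mathbb F[x]$, and each such polynomial has finitely many roots. Hence $\card\,\mathbb K_j\leq\aleph_0$, contradicting $\kappa>\aleph_0$.

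Next I apply formula (\ref{E: card K}) to $\mathbb K_j:\mathbb F$. This gives
\[
\kappa=\card\,\mathbb K_j=\max\{\aleph_0,\,\card\,\mathbb F,\,{\rm trd}(\mathbb K_j)\}=\max\{\aleph_0,\,{\rm trd}(\mathbb K_j)\}.
\]
Since $\kappa$ is uncountable, the maximum cannot be $\aleph_0$, so ${\rm trd}(\mathbb K_j)=\kappa$ for $j=1,2$. The two fields therefore have the same characteristic and the same absolute transcendence degree, and Theorem~\ref{T: Steinitz} yields $\mathbb K_1\cong\mathbb K_2$.

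The only step needing care is the use of formula (\ref{E: card K}). It rests on $\card(\mathbb F(\mathcal B))=\max\{\aleph_0,\card\,\mathbb F,\card\,\mathcal B\}$ and on the fact that an algebraic extension of an infinite field $\mathbb L$ has the same cardinality as $\mathbb L$. The latter holds because the extension embeds into the set of roots of the polynomials over $\mathbb L$, and there are $\card\,\mathbb L$ such polynomials, each with finitely many roots. Both facts use AC through cardinal arithmetic, which is part of the framework of Section~\ref{S: Set-Theoretical Framework}. Everything else is direct, so I expect no real obstacle beyond noting that uncountability is exactly what forces the transcendence degree to equal the cardinality; for countable fields this fails, e.g.\ $\overline{\mathbb Q}$ and $\overline{\mathbb Q(t)}$.
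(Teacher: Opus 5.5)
Your proposal is correct and follows the paper's proof. Both arguments reduce the statement to Theorem~\ref{T: Steinitz} by applying formula (\ref{E: card K}) with $\card\,\mathbb F\leq\aleph_0$, which gives ${\rm trd}\,\mathbb K=\card\,\mathbb K$ for uncountable $\mathbb K$. You also verify that $\mathbb K:\mathbb F$ is transcendental, which is needed for ${\rm trd}$ to be defined in the paper's sense; the paper leaves this step implicit.
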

 \begin{proof} This follows from Theorem~\ref{T: Steinitz}, because the absolute transcendence degree coincides with the cardinality of the field for uncountable fields. Indeed, let $\mathbb K$ be an algebraically closed field with $\card\, \mathbb K>\aleph_0$ and let $\mathbb F$ stands for its prime subfield. We observe that the formula (\ref{E: card K}) at the end of Section~\ref{S: Preliminaries: Ordered Fields} reduces to $\card(\mathbb K)=\max\{\aleph_0,\,  {\rm trd}\, \mathbb K\}$,
since $\card\, \mathbb F\leq\aleph_0$ and ${\rm trd}(\mathbb K:\mathbb F)={\rm trd}\, \mathbb K$, because $\mathbb F$ is the prime subfield of $\mathbb K$. On the other hand, the above formula implies $\card(\mathbb K)={\rm trd}\, \mathbb K$ (as desired), since $\card\, \mathbb K>\aleph_0$ by assumption. 
 \end{proof}
 
 	The above corollary justifies the following ``algebraic definition'' of the field $\mathbb C$. Notice that the field of reals $\mathbb R$ participates only through its cardinality, $\frak c$; the fact that $\mathbb R$ is a Dedekind complete topological field is not even mentioned in the definition.  (In contrast, compare with Section~\ref{S: Infinitesimals in Analysis}). 
	
\begin{definition}[Algebraic Definition of $\mathbb C$]\label{D: Abstract Definition of C} $\mathbb C$ is (by definition) an algebraically closed field of zero-characteristic and cardinality $\frak c$.
\end{definition}
 \begin{corollary}[Two Fields]\label{C: Two Fields}  The fields $\mathcal R_1(i)$ and $\mathcal R_2(i)$ are isomorphic for every two real closed fields $\mathcal R_1$ and $\mathcal R_2$ such that $\card(\mathcal R_1)=\card(\mathcal R_2)>\aleph_0$. In particular, $\mathcal R(i)$ and $\mathbb C$ are isomorphic for every real closed field $\mathcal R$ (Archimedean or not) of cardinality $\frak c$. Moreover, if  $\sigma: \mathcal R(i)\mapsto\mathbb C$ is an isomorphism, then  $\sigma(q)=q$ for all $q\in\mathbb Q$ and $\sigma(i)=i$ or $\sigma(i)=-i$. Consequently, $\sigma(x+iy)=\sigma(x)\pm i\sigma(y)$ for all $x, y\in\mathcal R$, depending on $\sigma(i)=\pm i$, respectively. 
\end{corollary}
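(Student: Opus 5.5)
The plan is to reduce the first claim to Corollary~\ref{C: Uncountable Cardinality}. By the Artin--Schreier theorem (item 15 of Section~\ref{S: Preliminaries: Ordered Fields}), $\mathcal R_k(i)$ is algebraically closed for $k=1,2$, because each $\mathcal R_k$ is real closed, hence formally real. Each $\mathcal R_k$ contains $\mathbb Q$ (item 10), so both fields have characteristic zero.

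For cardinality, $\mathcal R_k(i)=\{x+iy: x,y\in\mathcal R_k\}$ is in bijection with $\mathcal R_k\times\mathcal R_k$. By the cardinal arithmetic recalled in Section~\ref{S: Set-Theoretical Framework}, $\card(\mathcal R_k(i))=\card(\mathcal R_k)^2=\card(\mathcal R_k)$, which is uncountable. Thus $\mathcal R_1(i)$ and $\mathcal R_2(i)$ are algebraically closed fields of characteristic zero with the same uncountable cardinality, and Corollary~\ref{C: Uncountable Cardinality} makes them isomorphic.

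The particular case $\mathcal R_2=\mathbb R$ gives $\mathcal R(i)\cong\mathbb C$ whenever $\card(\mathcal R)=\frak c$. This uses that $\mathbb R$ is real closed (item 9) and $\frak c>\aleph_0$. Alternatively, apply Definition~\ref{D: Abstract Definition of C} directly.

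For the properties of an isomorphism $\sigma:\mathcal R(i)\to\mathbb C$, we argue as follows.
\begin{itemize}
\item Since $\sigma(1)=1$ and $\sigma$ is additive and multiplicative, $\sigma(n)=n$ for all $n\in\mathbb Z$. Then $\sigma(m/n)=\sigma(m)\sigma(n)^{-1}=m/n$, so $\sigma$ fixes $\mathbb Q$ pointwise.
\item Next, $\sigma(i)^2=\sigma(i^2)=\sigma(-1)=-1$. The only roots of $z^2+1$ in the field $\mathbb C$ are $\pm i$, so $\sigma(i)=\pm i$.
\item Finally, $\sigma(x+iy)=\sigma(x)+\sigma(i)\sigma(y)=\sigma(x)\pm i\sigma(y)$ for $x,y\in\mathcal R$.
\end{itemize}
Note that $\sigma(x)$ and $\sigma(y)$ need not be real. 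The formula only records this decomposition, and I will state it that way to avoid implying $\sigma(\mathcal R)\subseteq\mathbb R$.

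Every step is routine. The one point needing care is the cardinality computation $\card(\mathcal R(i))=\card(\mathcal R)$, which uses AC through $\kappa\cdot\kappa=\kappa$ for infinite $\kappa$. It is also the only place where the hypothesis of uncountability enters, namely to invoke Corollary~\ref{C: Uncountable Cardinality} rather than the finer Theorem~\ref{T: Steinitz}.
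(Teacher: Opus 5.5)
Your proposal is correct and follows the paper's proof. Artin--Schreier, characteristic zero, and $\card(\mathcal R_k(i))=\card(\mathcal R_k)$ feed into Corollary~\ref{C: Uncountable Cardinality}; the case $\mathcal R_2=\mathbb R$ gives $\mathcal R(i)\cong\mathbb C$; and $\sigma(i)=\pm i$ because $\sigma(i)$ is a root of $x^2+1$ in $\mathbb C$. You also spell out why $\sigma$ fixes $\mathbb Q$ pointwise and note that $\sigma(x),\sigma(y)$ need not be real, both of which the paper leaves implicit.
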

\begin{proof} An immediate consequence of Corollary~\ref{C: Uncountable Cardinality} taking into account that both $\mathcal R_1(i)$ and  $\mathcal R_2(i)$ are algebraically closed fields by Artin-Schreier theorem (Section~\ref{S: Preliminaries: Ordered Fields}) and both fields are of zero characteristic (since $\mathcal R_1$ and $\mathcal R_2$ are of zero characteristic). Moreover, $\card(\mathcal R_1(i))=\card(\mathcal R_2(i))$, because $\card(\mathcal R_n(i))=\card(\mathcal R_n)$ , $n=1, 2$. In particular, $\mathcal R(i)$ and $\mathbb C$ are isomorphic, because $\mathbb C=\mathbb R(i)$ and $\mathbb R$ is a real closed field. Finally, $\sigma(i)=\pm i$ follows from the fact that  $\sigma(i)$ is one of the two solutions of $x^2+1=0$ in $\mathbb C$. 
 \end{proof}
 
 	The next theorem is a counterpart of Steinitz theorem for real closed fields.
  \begin{theorem}[Erd\"{o}s, Gillman, Henriksen]\label{T: Erdos, Gillman and Henriksen} 
 Every two real closed fully saturated fields $\mathcal R_1$ and $\mathcal R_2$ of the same uncountable cardinality are isomorphic. 
 \end{theorem}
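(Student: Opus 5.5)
The plan is a back-and-forth (Cantor--Hausdorff style) argument, the classical strategy of Erd\H{o}s, Gillman and Henriksen adapted to real closed fields. Let $\kappa_+=\card(\mathcal R_1)=\card(\mathcal R_2)$, so that both fields are $\kappa_+$-saturated in the sense of Section~\ref{S: Preliminaries: Ordered Fields}. Enumerate $\mathcal R_1=\{a_\alpha:\alpha<\kappa_+\}$ and $\mathcal R_2=\{b_\alpha:\alpha<\kappa_+\}$. By transfinite recursion I will build an increasing chain of order-preserving field isomorphisms $\sigma_\alpha:K_\alpha\to L_\alpha$. Here $K_\alpha\subseteq\mathcal R_1$ and $L_\alpha\subseteq\mathcal R_2$ are real closed subfields of cardinality $<\kappa_+$ (that is, $\leq\kappa$). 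I start from the real closures of $\mathbb Q$ in each field, which are canonically order-isomorphic (the real algebraic numbers). At limit stages I take unions; a union of a chain of real closed subfields is again real closed, and its cardinality stays $\leq\kappa$ for $\alpha<\kappa_+$. At successor stages I alternately ensure $a_\alpha\in K_{\alpha+1}$ (forth) and $b_\alpha\in L_{\alpha+1}$ (back). The union $\sigma=\bigcup_\alpha\sigma_\alpha$ is then a bijective order-preserving field isomorphism $\mathcal R_1\to\mathcal R_2$.

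The successor step is the heart of the argument. Given $\sigma:K\to L$ and an element $a\in\mathcal R_1\setminus K$ with $\card K\leq\kappa$, the element $a$ is transcendental over $K$, since $K$ is real closed and hence relatively algebraically closed in the formally real field $\mathcal R_1$. The element $a$ determines a cut $(A,B)$ in $K$, where $A=\{x\in K:x<a\}$ and $B=\{x\in K:x>a\}$. I transport this cut by $\sigma$ to $(\sigma A,\sigma B)$ in $L$. The family of open intervals $\{(\sigma x,\sigma y):x\in A,\ y\in B\}$ has cardinality $\leq\kappa$ and has the finite intersection property, because finitely many such intervals contain the interval between the largest left endpoint and the smallest right endpoint, which is nonempty. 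One edge case needs care: if $A$ or $B$ is empty, I replace the missing side by intervals of the form $(\sigma x,\infty)$, written as $(\sigma x,\sigma x+c)$ where $c$ is an element exceeding $L$. Such a $c$ exists by saturation, using the intervals $(\ell,\ell')$ together with a bound.

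By $\kappa_+$-saturation of $\mathcal R_2$ there is then $b\in\mathcal R_2$ realizing the transported cut, and necessarily $b\notin L$. I then use the standard fact about ordered fields: an order isomorphism $K\to L$ extends uniquely to an order isomorphism $K(a)\to L(b)$ with $a\mapsto b$ whenever $a$ and $b$ fill corresponding cuts. To see this, note that the sign of $p(a)$ for $p\in K[x]$ is determined by the position of $a$ relative to the finitely many roots of $p$ in $K$, and these lie in $K$ because $K$ is real closed. Finally, I extend this isomorphism to the real closures of $K(a)$ in $\mathcal R_1$ and of $L(b)$ in $\mathcal R_2$, using uniqueness of real closure up to unique order isomorphism (Artin--Schreier). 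The cardinalities remain $\leq\kappa$. The back step is symmetric, with the roles of $\mathcal R_1$ and $\mathcal R_2$ exchanged.

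I expect the main obstacle to be this extension step: checking that realizing the cut really yields an order-preserving embedding of $K(a)$, and justifying the real-closure extension within the paper's preliminaries. The saturation bookkeeping, by contrast, is routine once the definition of $\kappa_+$-saturation for families of $\leq\kappa$ intervals is applied to the $\leq\kappa$ intervals coming from $K$. If full detail seems excessive, I would state the sign-determination lemma and cite Erd\H{o}s--Gillman--Henriksen for the remaining verification.
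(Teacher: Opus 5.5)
\textbf{Assessment.} Your back-and-forth argument is correct. In substance it is the Hausdorff-style argument by which Erd\"{o}s, Gillman and Henriksen prove their Thm.~2.1. The paper itself gives no argument: it only cites that theorem and adds that it ``requires GCH''.

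\textbf{What your version adds.} Your self-contained proof makes the hypotheses explicit. It uses only the following ingredients:
\begin{itemize}
\item a real closed subfield is relatively algebraically closed;
\item for real closed $K$, the sign of $p(a)$ with $p\in K[x]$ depends only on the cut of $a$ in $K$;
\item real closures are unique;
\item $\kappa_+$ is regular, which handles the limit stages;
\item the fields are $\kappa_+$-saturated at cardinality $\kappa_+$.
\end{itemize}
Nothing beyond ZFC is used. GCH (or CH) is needed only to know that such fields exist, or that a concrete field such as $\mathbb R^{\mathbb N}/\!\sim_{\mathcal U}$ of cardinality $\frak c$ is fully saturated, which requires $\frak c=\aleph_1$. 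So the paper's GCH remark really concerns the later applications, not the isomorphism theorem as stated.

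\textbf{One step to tighten.} In the edge case, ``using the intervals $(\ell,\ell')$ together with a bound'' does not describe a family whose intersection lies above $L$. Two clean fixes:
\begin{itemize}
\item Apply saturation to the intervals $(0,1/\ell)$ with $\ell\in L$, $\ell>0$. There are at most $\kappa$ of them and they have the finite intersection property, so you get $0<\varepsilon<1/\ell$ for all such $\ell$; then take $c=1/\varepsilon$.
\item Alternatively, when $a>K$, realize the cut of $1/a$ in $K$ instead. Its two sides, $\{x\le 0\}$ and $\{x>0\}$, are both nonempty, so the ordinary case applies, and you then invert.
\end{itemize}
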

 \begin{proof} We refer to (Erd\"{o}s, Gillman and Henriksen~\cite{ErdGillHenr55}, Thm.2.1, p.543). We should mention that this result requires GCH (in addition to the more conventional ZFC).
\end{proof}
\section{Infinitesimals in Algebra}\label{S: Infinitesimals in Algebra}

	\quad\; Our main result below has 
the character of a \emph{mathematical observation} and it is a  straightforward consequence of what was discussed in Section~\ref{S: Steinitz Theorem}. Our framework is purely algebraic (Definition~\ref{D: Abstract Definition of C}). The discussion of $\mathbb C$ from the point of view of analysis will be postpone for Section~\ref{S: Infinitesimals in Analysis}.
\begin{theorem}[Main Result]\label{T: Main Result} $\mathbb C$ contains a non-Archimedean totally ordered subfield $\mathcal R$. Consequently, the non-zero infinitesimals in $\mathcal R$ are also elements of $\mathbb C$.  
\end{theorem}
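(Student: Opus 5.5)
The plan is to produce a concrete non-Archimedean real closed field $\mathcal R$ of cardinality $\frak c$. Corollary~\ref{C: Two Fields} then gives an isomorphism $\sigma:\mathcal R(i)\to\mathbb C$, and I will take the image $\sigma(\mathcal R)$ as the desired subfield of $\mathbb C$. Since $\sigma$ is a field isomorphism, $\sigma(\mathcal R)$ is a subfield of $\mathbb C$. It becomes a totally ordered field if we transport the order of $\mathcal R$, declaring $\sigma(x)\ge 0$ iff $x\ge0$ in $\mathcal R$. Equivalently, because $\mathcal R$ is real closed, this order is intrinsic: the nonnegative elements are exactly the squares inside $\sigma(\mathcal R)$. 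Under this order, $\sigma$ maps $\mathbb Q$ identically and preserves order, so infinitesimals of $\mathcal R$ go to infinitesimals of $\sigma(\mathcal R)$. In particular, $\sigma(\mathcal R)$ is non-Archimedean, and its nonzero infinitesimals are elements of $\mathbb C$.

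For the construction of $\mathcal R$, I would take the field $\mathbb R\{t\}$ of Puiseux--Newton series $\sum_{n\ge m} a_n t^{n/k}$ with $a_n\in\mathbb R$, $m\in\mathbb Z$ and $k\in\mathbb N$. It is ordered by the sign of the lowest nonzero coefficient, as in the example of $\mathbb Q(t)$ in Section~\ref{S: Preliminaries: Ordered Fields}. The required facts are the following.
\begin{itemize}
\item[(a)] $\mathbb R\{t\}$ is real closed. This is the classical Newton--Puiseux theorem: $\mathbb C\{t\}=\mathbb R\{t\}(i)$ is algebraically closed, and $\mathbb R\{t\}$ is formally real, so Artin--Schreier applies.
\item[(b)] $t$ is a positive infinitesimal, because $1/n-t>0$ for every $n\in\mathbb N$.
\item[(c)] $\card\,\mathbb R\{t\}=\frak c$. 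It contains $\mathbb R$, so the cardinality is at least $\frak c$. Conversely, it is a countable union, over $k\in\mathbb N$ and $m\in\mathbb Z$, of sets in bijection with $\mathbb R^{\mathbb N}$, and $\card\,\mathbb R^{\mathbb N}=\frak c^{\aleph_0}=\frak c$. So the cardinality is at most $\frak c$.
\end{itemize}

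An alternative that avoids Newton--Puiseux is to take the real closure of $\mathbb R(t)$, ordered as above. This field is real closed by construction, has cardinality $\frak c$ since it is algebraic over $\mathbb R(t)$, and contains the infinitesimal $t$. This route only needs the existence of real closures of ordered fields, which is standard (Artin--Schreier). I would probably present this version as the cleanest, and mention $\mathbb R\{t\}$ as a more explicit model.

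The main obstacle is justifying the real closedness of the chosen field. For $\mathbb R\{t\}$ this is the substantive Newton--Puiseux theorem. For the real closure it is a cited existence result. The cardinality count and the transport of the order through $\sigma$ are routine.
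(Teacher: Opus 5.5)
Your proposal is correct and follows the paper's proof. Like the paper, you take a non-Archimedean real closed field $\mathcal R$ of cardinality $\frak c$ (the paper also names $\mathbb R\{t\}$ and real closures of rational function fields in its Examples), obtain $\sigma:\mathcal R(i)\to\mathbb C$ from Corollary~\ref{C: Two Fields}, and note that $\sigma$ fixes $\mathbb Q$ and preserves the squares-defined order, so positive infinitesimals of $\mathcal R$ map to positive infinitesimals of $\sigma[\mathcal R]$. Your cardinality count and your justification of real closedness (Newton--Puiseux with Artin--Schreier, or existence of real closures) supply details the paper only asserts.
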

\begin{proof} Suppose $\mathcal R$ is a non-Archimedean real closed field of $\card(\mathcal R)=\frak c$. The existence of such a field is left to Examples~\ref{Exs: Non-Archimedean Real Closed Fields of Cardinality c} in the next section. By Corollary~\ref{C: Two Fields}, the fields $\mathcal R(i)$ and $\mathbb C$ are isomorphic. Let  $\sigma: \mathcal R(i)\mapsto\mathbb C$ be any isomorphism between them. The subfield $\sigma[\mathcal R]$ of $\mathbb C$ is also real closed field. Moreover, $\sigma[\mathcal R]$ is also non-Archimedean, because  the algebraic operations in a real closed field determines uniquely the order. Indeed, let $x$ be a positive infinitesimal in $\mathcal R$, i.e. $0<x<1/n$ for all $n\in\mathbb N$. Thus $0<\sigma(x)<1/n$ for all $n\in\mathbb N$ as well, since $\sigma(1/n)=1/n$. The latter means that $\sigma(x)$ is a positive infinitesimal in $\sigma[\mathcal R]$. Clearly, $\sigma(x)\in\mathbb C$. 
\end{proof}
\begin{definition} [Infinitesimals in $\mathbb C$]\label{D: Infinitesimals in C} Let $\mathcal R$ be a non-Archimedean real closed field of cardinality $\frak c$ and $\sigma: \mathcal R(i)\mapsto\mathbb C$ be a field isomorphism. We define the absolute value $|\cdot|_{\sigma[\mathcal R]}: \mathbb C\mapsto\mathbb C$, (with range $\ran(|\cdot|_{\sigma[\mathcal R]})=\{\sigma(\mathcal R): x\in\mathcal R, x\geq0\}$) by the formula $|\sigma(x+iy|_{\sigma[\mathcal R]}=\sigma(\sqrt{x^2+y^2})$ for all $x, y\in\mathcal R$. For the sets of the finite, infinitesimal, finite, but non-infinitesimal and infinitely large elements of $\mathbb C$ we have:
\begin{align}
&\mathcal F=\{\sigma(x+iy)\in\mathbb C:\;\sqrt{x^2+y^2}\leq n \text{ for some } n\in\mathbb N\},\notag\\ 
&\mathcal I=\{\sigma(x+iy)\in\mathbb C:\; \sqrt{x^2+y^2}< 1/n \text{ for all } n\in\mathbb N\},\notag\\ 
&\mathcal U=\mathcal F\setminus\mathcal I=\{\sigma(x+iy)\in\mathbb C:\; 1/n\leq\sqrt{x^2+y^2}\leq n \text{ for some } n\in\mathbb N\},\notag\\ 
&\mathcal L=\{\sigma(x+iy)\in\mathbb C:\; n<\sqrt{x^2+y^2} \text{ for all } n\in\mathbb N\},\notag 
\end{align}
respectively, where we have taken advantage of the fact that $\sigma(q)=q$ for all $q\in\mathbb Q$.
\end{definition}
\begin{theorem}[Algebraic vs. Transcendental]\label{T: Algebraic vs. Transcendental}
Let $\mathcal R$ be a non-Archimedean real closed field of cardinality $\frak c$ (as above). Let $\mathbb A$ denote the field of the real algebraic numbers and $\mathbb A(i)$ stand for the field of all algebraic numbers. Then:
 \begin{T-enum}
\item $\mathbb A(i)\setminus\{0\}\subset\mathcal U$. Consequently, a non-zero algebraic number cannot be neither infinitesimal, nor an infinitely large element of $\mathbb C$ (whatever is the choice  of $\mathcal R$ and $\sigma$). 
\item Non-zero infinitesimals and infinitely large numbers in $\mathbb C$ are all transcendental, i.e. $\mathcal I\setminus\{0\}\subset\mathbb C\setminus\mathbb A(i)$ and  $\mathcal L\subset\mathbb C\setminus\mathbb A(i)$.
\item Let $z\in\mathbb A(i)$ and $dx\in\mathcal I$. Then $z+dx\in\mathbb A(i)$ if and only if  $dx=0$.
\item For every non-Archimedean real closed field $\mathcal R$ of cardinality $\frak c$ and every transcendental number $z\in\mathbb C$ there exists an isomorphism $\sigma_1: \mathcal R(i)\mapsto\mathbb C$ such that $\sigma_1(z)\notin\mathcal U$. Consequently, one of the two numbers in the set $\{\sigma_1(z),\; 1/\sigma_1(z)\}$ is a non-zero infinitesimal and the other is an infinitely large element of $\mathbb C$.
\end{T-enum}
\end{theorem}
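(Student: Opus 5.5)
The plan for (i) rests on the fact that $\sigma$ fixes $\mathbb Q$, so $\sigma^{-1}$ maps the algebraic numbers $\mathbb A(i)\subset\mathbb C$ bijectively onto the algebraic closure of $\mathbb Q$ inside $\mathcal R(i)$. That closure is $\mathbb A_{\mathcal R}(i)$, where $\mathbb A_{\mathcal R}$ is the relative algebraic closure of $\mathbb Q$ in $\mathcal R$. It therefore suffices to show that every nonzero $x+iy$ with $x,y\in\mathcal R$ algebraic over $\mathbb Q$ satisfies $1/n\le\sqrt{x^2+y^2}\le n$ for some $n$. Here $r=\sqrt{x^2+y^2}\in\mathcal R$ is algebraic over $\mathbb Q$, since algebraic elements form a field closed under square roots taken in $\mathcal R$, and $r>0$.

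The key lemma is that a nonzero element $r$ of an ordered field which is a root of $a_dX^d+\dots+a_0$ with $a_j\in\mathbb Q$ is neither infinitesimal nor infinitely large. We may assume $a_0\neq0$ by dividing by a power of $X$, using $r\neq0$, and $a_d\neq0$. For the upper bound we use the Cauchy bound $|r|\le 1+\max_j|a_j/a_d|$, which is a rational number and holds in any ordered field by the usual argument: if $|r|>1+M$ then $|a_dr^d|>|\sum_{j<d}a_jr^j|$. For the lower bound we apply the same bound to $1/r$, which is a root of the reversed polynomial. This gives $r\in\mathcal U$ and hence (i). Part (ii) is then the contrapositive of (i), noting $0\in\mathbb A(i)$.

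For (iii), suppose $z+dx=w\in\mathbb A(i)$. Then $dx=w-z\in\mathbb A(i)\cap\mathcal I$, and by (i) we get $dx=0$; the converse is trivial.

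For (iv), fix $\mathcal R$, an isomorphism $\sigma:\mathcal R(i)\to\mathbb C$, and a transcendental $z$. Choose $t\in\mathcal R$ positive infinitely large, which is possible since $\mathcal R$ is non-Archimedean. Then $t$ is transcendental over $\mathbb Q$ by the key lemma, so $\sigma(t)$ is a transcendental element of $\mathbb C$. The main step is to build an automorphism $\tau$ of $\mathbb C$ with $\tau(z)=\sigma(t)$, and to set $\sigma_1=\tau\circ\sigma$ (we read the statement as requiring $\sigma_1(z)\notin\mathcal U$ where $\mathcal U$ is defined via $\sigma_1$; in the intended sense, the element $\sigma_1^{-1}(z)$ is not a unit of $\mathcal R(i)$). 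The construction is as follows:
\begin{itemize}
\item Extend $\{z\}$ and $\{\sigma(t)\}$ to transcendence bases $B,B'$ of $\mathbb C$ over $\mathbb Q$, both of cardinality $\mathfrak c$ by Corollary~\ref{C: Uncountable Cardinality}'s argument.
\item Take a bijection $B\to B'$ sending $z\mapsto\sigma(t)$. It induces an isomorphism $\mathbb Q(B)\to\mathbb Q(B')$.
\item Extend this to an isomorphism of the algebraic closures, both equal to $\mathbb C$, by uniqueness of algebraic closure. This extension step is exactly the proof of Theorem~\ref{T: Steinitz}.
\end{itemize}
Then $\sigma_1^{-1}$ maps $z$ to $t$, which is infinitely large, so $z$ lies in $\mathcal L$ for the structure defined by $\sigma_1$. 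Its reciprocal is a nonzero infinitesimal; alternatively, we could map $z$ to $1/t$ instead.

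The main obstacle is this last point: correctly interpreting $\sigma_1(z)$ and showing that the Steinitz extension can be made to send a prescribed transcendental to a prescribed transcendental. The extension-of-isomorphism argument handles the second part.
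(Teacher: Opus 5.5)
Your proof is essentially the paper's. For (i) you use a Cauchy root bound on the algebraic number and on its reciprocal. You apply it to the modulus $r$ in the ordered field $\mathcal R$ rather than directly to the $\sigma[\mathcal R]$-valued absolute value, which is a harmless variation. Parts (ii) and (iii) then follow immediately. For (iv) you match transcendence bases and extend to an automorphism of $\mathbb C$. Your justification there, uniqueness of algebraic closure, is more precise than the paper's bare appeal to $\mathbb C:\mathbb Q$ being Galois.

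There is one slip to fix. With $\tau(z)=\sigma(t)$ you must take $\sigma_1=\tau^{-1}\circ\sigma$, because $\tau\circ\sigma$ sends $t$ to $\tau(\sigma(t))$, which need not be $z$. Equivalently, build $\tau$ with $\tau(\sigma(t))=z$, as the paper does with $\widehat{\varphi}(\sigma(x))=z$ and $\sigma_1=\widehat{\varphi}\circ\sigma$.
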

\begin{proof}
\begin{Pf-enum}
\item Suppose $z\in\mathbb A(i)\setminus\{0\}$. We have $P(z)=0$ for some polynomial $P(t)=q_0t^d+q_1t^{d-1}+\dots+q_d$, where $d\in\mathbb N$, $d\geq 1$, and  $q_k\in\mathbb Q$, $k=0, 1,\dots, d$, since $\mathbb A(i)$ is an algebraically closed field. Thus $|z|_{\sigma[\mathcal R]}\leq 1+\frac{d}{|q_0|}\max_{1\leq k\leq n}|q_k|$, where we have taken into account that  $|q|_{\sigma[\mathcal R]}=|q|$ for all $q\in\mathbb Q$. The latter implies $|z|_{\sigma[\mathcal R]}\leq n$ for any $n\in\mathbb N$ such that $1+\frac{d}{|q_0|}\max_{1\leq k\leq n}|q_k|\leq n$. Thus $z\in\mathcal F$. Similarly, $1/z\in\mathcal F$ (since $1/z\in\mathbb A(i)\setminus\{0\}$). Hence, $z\in\mathcal U$ as required. 
\item is equivalent to (i). 
\item Let $z+dx\in\mathbb A(i)$ and suppose (seeking a contradiction) that $dx\not= 0$. Thus $dx=z+dx-z\in\mathbb A(i)\setminus\{0\}$ implying $dx\in\mathcal U$ by (i), contradicting the assumption that $dx\in\mathcal I$. Conversely, $dx=0$ implies $z+dx\in\mathbb A(i)$ (trivially), because $z\in\mathbb A(i)$ by assumption. 
\item Let  $\sigma: \mathcal R(i)\mapsto\mathbb C$ be an isomorphism, $x$ be a non-zero infinitesimal in $\mathcal R$ and $\sigma(x)$ be the corresponding non-zero infinitesimal in $\mathbb C$ (as in the proof of Theorem~\ref{T: Main Result}). Let $z\in\mathbb C$ be a transcendental number. We extend $\{\sigma(x)\}$ and $\{z\}$ to two transcendence bases $\mathcal B_x:=\{\sigma(x)\}\cup\mathbb B_x$ and   $\mathcal B_z:=\{z\}\cup\mathbb B_z$, respectively, of the same field extension $\mathbb C: \mathbb Q$ (Section~\ref{S: Preliminaries: Ordered Fields}).  Let $\varphi: \mathbb Q(\mathcal B_x)\mapsto\mathbb Q(\mathcal B_z)$ denote the isomorphism defined by $\varphi(\sigma(x))=z$ and $\varphi[\mathcal B_x]=\mathcal B_z$. Since $\mathbb C\!: \!\mathbb Q$ is a Galois extension, $\varphi$ admits an  extension to an automorphism $\widehat{\varphi}: \mathbb C\mapsto\mathbb C$ (Hunderford~\cite{tHungerford}, Ch.VI) and/or  (Milne~\cite{jsMilne}, Ch.7); and we still have $\widehat{\varphi}(\sigma(x))=z$. Now, the isomorphism $\sigma_1: \mathcal R(i)\mapsto\mathbb C$,  defined by the formula $\sigma_1=\widehat{\varphi}\circ\sigma$, is what we are looking for, because $z=\sigma_1(x)$ is a non-zero infinitesimal. 
\end{Pf-enum}
\end{proof}
\quad\; The ``luxury'' for having infinitesimals in $\mathbb C$ (as any luxury in life or science) is not ``for free''. This is the content of the following remark.
\begin{remark}[Price to Pay]\label{R: Price to Pay} Let  $\sigma: \mathcal R(i)\mapsto\mathbb C$ be an isomorphism, where $ \mathcal R$ is a non-Archimedean real closed field of cardinality $\frak c$ (Theorem~\ref{T: Main Result}) and assume, in addition, that $\mathcal R$ contains a copy of $\mathbb R$. (The fields: $\mathbb R\{t\}$, $\mathbb R\langle t\rangle$, $\mathbb R[t]$, $^*\mathbb R$, $^*\!\mathbb A$ and $^*\mathbb R_\alpha$ in Example~\ref{Exs: Non-Archimedean Real Closed Fields of Cardinality c} satisfy these assumptions.) Notice that the field extension $\mathbb C: \sigma[\mathbb R]$ is transcendental (since $\mathcal R:\mathbb R$ is transcendental), in contrast to $\mathbb C: \mathbb R$, which is algebraic. Consequently, \emph{there is no automorphism} $\Phi: \mathbb C\mapsto\mathbb C$ such that $\Phi[\mathbb R]=\sigma[\mathbb R]$, because both field extensions are Galois extensions (Hunderford~\cite{tHungerford}, Ch.VI) and/or  (Milne~\cite{jsMilne}, Ch.7).
\end{remark}
\section{Examples of Non-Archimedean Real Closed Fields of Cardinality $\frak c=\card\mathbb R$}\label{S: Examples of Non-Archimedean Real Closed Fields of Cardinality c}

	\quad\; The complete the proof of Theorem~\ref{T: Main Result},  we have to show that there exist non-Archimedean real closed fields $\mathcal R$ of cardinality $\frak c$. The reader without background on non-standard analysis might decide to skip the last two or three examples with origin in non-standard analysis. As we mention in our Introduction, a single example, say $\mathcal R=\mathbb R\{t\}$, is enough to support Theorem~\ref{T: Main Result}.
\begin{examples}[Non-Archimedean Real Closed Fields of Cardinality $\frak c$]\label{Exs: Non-Archimedean Real Closed Fields of Cardinality c} All fields $\mathcal R$  listed below are real closed fields of cardinality $\frak c=\card\,\mathbb R$. The fields in the first two examples are Archimedean and the rest are non-Archimedean. Moreover, some of the fields contains a copy of $\mathbb R$, others not: 
\begin{enumerate}
\item We start with the field of real numbers $\mathbb R$.
\item Let $\alpha\in\mathbb R$ be a transcendental real number (say, $\alpha=\pi$ or $\alpha=e$) and $\mathcal B_\alpha$ be a subset of $\mathbb R$ such that $\{\alpha\}\cup\mathcal B_\alpha$ forms a transcendence basis of the field extension $\mathbb R\!:\!\mathbb Q$ (see the end of Section~\ref{S: Preliminaries: Ordered Fields}). Let $\mathbb R_\alpha$ be the real closure of $\mathbb Q(\mathcal B_\alpha)$ within $\mathbb R$. Then $\mathbb R_\alpha$ is an \emph{Archimedean real closed field of cardinality} $\frak c$, which does not contain a copy of $\mathbb R$, because $\alpha\notin\mathbb R_\alpha$. Thus $\mathbb R_\alpha(i)$ is a proper subfield of $\mathbb C$ (since $\mathbb R_\alpha$ is a proper subfield of $\mathbb R$) and, at the same time, $\mathbb R_\alpha(i)$ is isomorphic to $\mathbb C$ by Theorem~\ref{T: Main Result}. 
\item Let $\mathbb R_\alpha(t)$ stand for the field of rational functions with coefficients in $\mathbb R_\alpha$ (ordered such that $t$ is a positive infinitesimal) and $\widehat{\mathbb R}_\alpha$ is a real closure of $\mathbb R_\alpha(t)$. Then $\widehat{\mathbb R}_\alpha$ is a non-Archimedean real closed field of cardinality $\frak c$. Hence, $\widehat{\mathbb R}_\alpha(i)$ is isomorphic to $\mathbb C$ by Theorem~\ref{T: Main Result}
\item The field $\mathbb R\{t\}$ of \emph{Puiseux series} (also known as \emph{Puiseux-Newton series}) with real coefficients (Prestel~\cite{aPrestel})
 is defined by
\[
\mathbb R\{t\}=:\Big\{\sum_{n=\mu}^{\infty}a_n(\sqrt[m]{t})^{n} : \mu\in\mathbb Z,\; m\in\mathbb N,\; a_n\in\mathbb R\Big\}.
\] 
Puiseux series were introduced by Isaac Newton~\cite{iNewton1736} in 1736 and used for calculation of planet's orbits. Later $\mathbb R\{t\}$ was rediscovered by Victor Puiseux~\cite{vPuiseux1850}-\cite{vPuiseux1851}.  Clearly, $\mathbb R\{t\}$ has cardinality $\frak c$. $\mathbb R\{t\}$ is, in fact, a \emph{real closure of the field $\mathbb R\bra t^\mathbb Z\ket$ of Laurent series} with real coefficients hence, it is a real closed field.
\item The field of \emph{Levi-Civit\'{a}~\cite{tLeviCivita} series} with real coefficients is defined by
\[
\mathbb R\langle t^{\mathbb R}\rangle=:\Big\{\sum_{r\in\mathbb R} a_r t^{r} : a_r\in\mathbb R \text{ and } \supp(\sum_{r\in\mathbb R} a_r t^{r})  \text{ is a left-finite set}\Big\}.
\]
where $ \supp(\sum_{r\in\mathbb R} a_r t^{r})=\{r\in\mathbb R: a_r\not=0\}$. Recall that a subset $S$  of $\mathbb R$ is a \emph{left-finite} if $\{s\in S: s\leq r\}$ is finite for all $r\in\mathbb R$. The field $\mathbb R\langle t^{\mathbb R}\rangle$ is a real closed field of cardinality $\frak c$. Levi-Civit\'{a} series are used nowadays for symbolic calculations of derivatives of real functions in computers (Shamseddine, Berz~\cite{kShamseddine2010}).
\item The field of \emph{Hahn's (generalized) series with coefficients in $\mathbb R$ and valuation group} $(\mathbb R, +, <)$ is defined by
\[
\mathbb R[t^{\mathbb R}]=: \Big\{\sum_{r\in\mathbb R} a_rt^r : a_r\in\mathbb R \text{ and } \supp(\sum_{r\in\mathbb R} a_r t^{r})  \text{ is a well ordered set}\Big\},
\]
(Hahn~\cite{hHahn}). For example, $1+t^{1/2}+t^{2/3}+t^{3/4}+\dots\in \mathbb R[t^{\mathbb R}]\setminus\mathbb R\langle t^{\mathbb R}\rangle$, since $\lim_{n\mapsto\infty}\frac{n}{n+1}=1\not=\infty$ hence, the set $\{0, 1/2, 2/3,\dots\}$ is well-ordered, but not left-finite. $\mathbb R[ t^{\mathbb R}]$ is a real closed field of cardinality $\frak c$. Hahn' series were developed in connection with Hilbert's Second Problem.
\item Robinson's field of non-standard numbers $^*\mathbb R=\mathbb R^\mathbb N/\!\!\sim_\mathcal U$. Here $\mathbb R^\mathbb N$ stands for the ring of the sequences of real numbers, $\mathcal U$ is a free ultrafilter on $\mathbb N$ and 
$(a_n)\sim_\mathcal U (b_n)$ on $\mathbb R^\mathbb N$ if $\{n\in \mathbb N: a_n=b_n\}\in\mathcal U$. We denote by $\langle a_n\rangle$ the equivalence class of $(a_n)$.  We embed $\mathbb R\subset{^*\mathbb R}$ under  $r\mapsto \bra r, r,\dots\ket$. Consequently, $\card(^*\mathbb R)=\card(\mathbb R)=\frak c$. The field $^*\mathbb R$ is a real closed field by Transfer Principle (Davis~\cite{mDavis}, p.28), since $\mathbb R$ is a real closed field. Moreover, $^*\mathbb R$ is non-Archimedean field; in particular, $\bra n\ket$ is infinitely large positive number, because $\{n\in\mathbb N: m<n\}\in\mathcal U$ for any (fixed) $m\in\mathbb N$. We should mention that although A. Robinson~\cite{aRob66} is commonly accepted as the creator of non-standard analysis, the ultrapower construction of $^*\mathbb R$ mentioned above appears, first, in Luxemburg~\cite{wLux62} (see also Stroyan \& Luxemburg~\cite{StroLux76}). For a presentations on the topic we mention as well (Loeb \& Wolff~\cite{LoebWolff}) and (Lindstr\o m~\cite{tLindstrom}).  
\item Let $\mathbb A$ denote the field of real algebraic numbers. Recall that $\mathbb A$ is an Archimedean real closed field of cardinality $\aleph_0$. Let $^*\!\mathbb A=\mathbb A^\mathbb N/\!\!\sim_\mathcal U$ stand for the non-standard extension of $\mathbb A$. The field $^*\!\mathbb A$ is a \emph{non-Archimedean real closed field of cardinality} $\frak c$ (the help of CH is required). Moreover, $^*\!\mathbb A$ contains (non-canonically) a copy of $\mathbb R$. Indeed, every subfield $\mathcal A$ of $^*\!\mathbb A$, which is a maximal subring of $\mathcal F(^*\!\mathbb A)$ (a field of representatives) is isomorphic to $\mathbb R$, because $\mathcal F(^*\!\mathbb A)/\mathcal I(^*\!\mathbb A)$ is isomorphic of $\mathbb R$ (Davis~\cite{mDavis}, Thm.2.5, p.52). Here $\mathcal F(^*\!\mathbb A)$ and $\mathcal I(^*\!\mathbb A)$ stand for the sets of finite and infinitesimal elements of $^*\!\mathbb A$, respectively.  The existence of such a maximal field $\mathcal A$ is guaranteed by Zorn lemma.
\item Similarly to above example,  the non-standard extension $^*\mathbb R_\alpha$ of $\mathbb R_\alpha$ is a \emph{non-Archimedean real closed field of cardinality} $\frak c$, which (like $^*\!\mathbb A$, but unlike $\mathbb R_\alpha$) contains a copy of $\mathbb R$.
\item Let $\rho\in{^*\mathbb R}$ be a (fixed) positive infinitesimal. \emph{Robinson's field of asymptotic numbers} is defined by $^\rho\mathbb R=\mathcal M_\rho/\mathcal N_\rho$, where
\begin{align}
&\mathcal M_\rho=\{x\in{^*\mathbb R}: |x|\leq\rho^{-n} \text{ for some } n\in\mathbb N\},\notag\\
&\mathcal N_\rho=\{x\in{^*\mathbb R}: |x|<\rho^{n} \text{ for all } n\in\mathbb N\},\notag
\end{align}
(Robinson~\cite{aRob73}, Lightstone \& Robinson~\cite{LiRob}). Notice that $e^{1/\rho}\notin\mathcal M_\rho$ and $e^{-1/\rho}\in\mathcal N_\rho$. The field $^\rho\mathbb R$ is real closed and spherically complete (Luxemburg~\cite{wLux76},\, Pestov~\cite{vPestov}).
More recently, the field  $^\rho\mathbb C$ had beed used in the role of \emph{fields of scalars of the algebra of generalized  functions of Colombeau type} (Oberguggenberger \&Todorov~\cite{OberTod98}, Todorov \&Vernaeve~\cite{TodVern08},  Todorov~\cite{tdTodAxioms10}-\cite{tdTodorov2024}). For more details we refer to Section~\ref{S: Infinitesimals in Analysis}.

\item Let us replace $\mathbb R$ in the fields $\mathbb R\{t\}$, $\mathbb R\bra t^\mathbb R\ket$, $\mathbb R[t^\mathbb R]$ by any real closed field (Archimedean or not) of cardinality $\frak c$. The result will be another non-Archimedean real closed field of cardinality $\frak c$. For example, the fields:  $\mathbb R_\alpha\{t\}$, $\mathbb R_\alpha\bra t^\mathbb R\ket$, $\mathbb R_\alpha[t^\mathbb R]$, $^*\mathbb R\{t\}$, $^*\mathbb R\bra t^\mathbb R\ket$, $^*\mathbb R[t^\mathbb R]$, $^*\!\mathbb A\{t\}$, $^*\!\mathbb A\bra t^\mathbb R\ket$, $^*\!\mathbb A[t^\mathbb R]$, $^*\mathbb R_\alpha\{t\}$, $^*\mathbb R_\alpha\bra t^\mathbb R\ket$, $^*\mathbb R_\alpha[t^\mathbb R]$, $^\rho\mathbb R\{t\}$, $^\rho\mathbb R\bra t^\mathbb R\ket$ and $^\rho\mathbb R[t^\mathbb R]$, etc., are \emph{non-Archimedean real closed field of cardinality} $\frak c$.
\end{enumerate}
\end{examples}

	It is clear that all fields listed above are pairwise non-isomorphic. Let $\mathcal R$ be any of the real closed fields (Archimedean or not) of cardinality $\frak c$ listed in  Examples~\ref{Exs: Non-Archimedean Real Closed Fields of Cardinality c}. Then  $\mathbb C\cong\mathcal R(i)$ by Corollary~\ref{C: Two Fields}, where $\cong$ stands for \emph{isomorphic}, since $\mathbb C=\mathbb R(i)$. In particular,
\begin{equation}\label{E: C, *C, rhoC}
\mathbb C\cong{^*\mathbb C}\cong{^\rho\mathbb C},
\end{equation}
where ${^*\mathbb C}={^*\mathbb R}(i)$ stands for the \emph{non-standard extension of} $\mathbb C$ and ${^\rho\mathbb C}={^\rho\mathbb R}(i)$ is the field of \emph{Robinson's complex asymptotic numbers}. As far as we can detect, the isomorphism (\ref{E: C, *C, rhoC}) had not beed noticed in the literature so far.
\section{Uniqueness up to Isomorphism}\label{S: Uniqueness up to Isomorphism}

	\quad\; Some of the numerous fields listed in Examples~\ref{Exs: Non-Archimedean Real Closed Fields of Cardinality c} are actually, identical or isomorphic under some additional assumptions. For example, if $\rho_1/\rho_2$ is a finite, but non-infinitesimal element of $^*\mathbb R$ for some positive infinitesimals $\rho_1, \rho_2\in{^*\mathbb R}$, we have $\mathcal M_{\rho_1}=\mathcal M_{\rho_2}$. The latter implies $\mathcal N_{\rho_1}=\mathcal N_{\rho_2}$, since $\mathcal M_{\rho_1}$ and $\mathcal M_{\rho_2}$ are convex rings. In this case $^{\rho_1}\mathbb R$ and $^{\rho_2}\mathbb R$ are not just isomorphic, but identical. In this section we impose CH in addition to ZFC (Section~\ref{S: Set-Theoretical Framework}) and show that many of the fields, listed in Examples~\ref{Exs: Non-Archimedean Real Closed Fields of Cardinality c}, are actually, isomorphic. We hope that the discussion in this section leads to a simplification and some conceptual clarity.
\begin{enumerate}
\item Recall that the fields $^*\mathbb R$ are an algebraically saturated. For the original result we refer to Luxemburg~\cite{wLux69}, p.18-86) and for a presentation we mention also (Lindstr\o m~\cite{tLindstrom}, Thm.1.2.5, p.12). Consequently, $\mathbb R^\mathbb N/\!\!\sim_{\mathcal U_1}$ and  $\mathbb R^\mathbb N/\!\!\sim_{\mathcal U_2}$ are isomorphic by Theorem~\ref{T: Erdos, Gillman and Henriksen} (which requires CH) for every two free ultrafilters $\mathcal U_1$ and $\mathcal U_2$ on $\mathbb N$. 
\item In contrast to the above, the fields $^\rho\mathbb R$ are not saturated and Theorem~\ref{T: Erdos, Gillman and Henriksen}  is not directly applicable. However, each of the fields $^\rho\mathbb R$ turns out to be isomorphic to ${^*\mathbb R}[t^\mathbb R]$, the Hahn field with coefficients in $^*\mathbb R$ and valuation group $(\mathbb R, +, <)$. Thus ${^{\rho_1}\mathbb R}$ and ${^{\rho_2}\mathbb R}$ are isomorphic for every two positive infinitesimals, $\rho_1$ and $\rho_2$ in $^*\mathbb R$ (Todorov\,\&Wolf~\cite{TodWolf04}). The latter requires CH. 
\item We have two \emph{self-embeddings} $^*\mathbb R\emb{^\rho\mathbb R}$ and $^\rho\mathbb R\emb{^*\mathbb R}$ (both non-canonical). 
\begin{itemize}
\item The embedding $^*\mathbb R\emb{^\rho\mathbb R}$ follows from the isomorphism between ${^*\mathbb R}[t^\mathbb R]$ and $^\rho\mathbb R$ mentioned above.
\item To show the embedding $^\rho\mathbb R\emb{^*\mathbb R}$, we exploit  the fact that $^\rho\mathbb R$ is isomorphic to any  subfield $\mathbb M$ of $^*\mathbb R$, which is maximal subring of $\mathcal M _\rho$ (a field of representatives). The existence of such a maximal field $\mathbb M$ is guaranteed by Zorn lemma (Todorov\,\&Wolf~\cite{TodWolf04}, Todorov~\cite{tdTod06Notes}, \S14). 
\end{itemize}
\item On the ground of isomorphism ${^\rho\mathbb R}\cong{^*\mathbb R}[t^\mathbb R]$, we can eliminate $^\rho\mathbb R$ and write 
\begin{align}
&\mathbb R\{t\}\subset\mathbb R\bra t^\mathbb R\ket\subset\mathbb R[t^\mathbb R]\subset{^*\mathbb R}[t^\mathbb R]\emb{^*\mathbb R},\notag\\
&\mathbb R\{t\}\subset\mathbb R\bra t^\mathbb R\ket\subset\mathbb R[t^\mathbb R]\emb{^*\mathbb R}\subset{^*\mathbb R}[t^\mathbb R],\notag
\end{align}
where $\subset$ stands for ``subfield'' (hence, ``subset'') and $\emb$ stands for ``field embedding''. 
\end{enumerate}
\section{$\kappa$-Complex Numbers}\label{S: kappa-Complex Numbers}

	\quad\; We generalize the result of Theorem~\ref{T: Main Result} and show that every field $\mathbb C_\kappa$ of $\kappa$-complex numbers (defined below) contains non-zero infinitesimals. 

\begin{definition}[$\kappa$-Complex Numbers]\label{D: kappa-Complex Numbers} Let $\kappa$ be an infinite cardinal and let $\mathbb C_{\kappa}$ be an algebraically closed field of zero characteristic and cardinality $\kappa_+$. We refer to the elements of $\mathbb C_{\kappa}$ as $\kappa$-complex numbers.
\end{definition}
	In the next theorem we show the existence of $\mathbb C_{\kappa}$ for every infinite cardinal $\kappa$. For the concept of \emph{$\kappa_+$-good free ultrafilter} (used below) we refer to Chang \& Keisler~\cite{ChangKeisler} and for an exposition on the topic we also mention  (Lindstr\o m~\cite{tLindstrom}, Appendix). 
\begin{theorem}[Generalized Fundamental Theorem of Algebra]\label{T: Generalized Fundamental Theorem of Algebra}
 For every infinite cardinal $\kappa$  there exists a unique, up to isomorphism, algebraically closed field of zero-characteristic and  cardinality $\kappa_+$. Consequently $\mathbb C_\kappa$ exists for every infinite cardinal $\kappa$.
 \end{theorem}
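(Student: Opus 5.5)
The statement has two parts: existence and uniqueness. Uniqueness is immediate from Corollary~\ref{C: Uncountable Cardinality}. Since $\kappa$ is infinite, $\kappa_+>\aleph_0$ is uncountable. Hence any two algebraically closed fields of characteristic zero and cardinality $\kappa_+$ are isomorphic. So the real content is existence, and for that I will build an explicit algebraically closed field of characteristic zero whose cardinality is exactly $\kappa_+$.

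The most elementary construction avoids ultrafilters entirely. Take a set $\mathcal B$ of indeterminates with $\card\,\mathcal B=\kappa_+$ and form the purely transcendental field $\mathbb Q(\mathcal B)$. Its cardinality is $\max\{\aleph_0,\kappa_+\}=\kappa_+$; this is the same computation as in formula (\ref{E: card K}), because the elements are quotients of polynomials in finitely many variables. Let $\mathbb K$ be an algebraic closure of $\mathbb Q(\mathcal B)$; its existence is a standard consequence of AC/Zorn's lemma. Then $\mathbb K$ is algebraically closed and of characteristic zero. Moreover $\mathbb K:\mathbb Q(\mathcal B)$ is algebraic, so formula (\ref{E: card K}) with $\mathbb F=\mathbb Q$ and ${\rm trd}(\mathbb K)=\kappa_+$ gives $\card\,\mathbb K=\kappa_+$. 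The same conclusion follows from the observation that an algebraic extension of an infinite field $\mathbb F$ has cardinality $\card\,\mathbb F$: each element is one of finitely many roots of some polynomial over $\mathbb F$, and there are only $\card\,\mathbb F$ such polynomials.

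Alternatively, to stay closer to the paper's real-closed viewpoint, I can produce a real closed field $\mathcal R$ of cardinality $\kappa_+$. For instance, take the real closure of $\mathbb Q(\mathcal B)$ ordered in some way; any ordering works, for example ordering the elements of $\mathcal B$ lexicographically as successively larger infinitesimals. Then Artin--Schreier shows that $\mathcal R(i)$ is algebraically closed of cardinality $\kappa_+$. The hint about $\kappa_+$-good ultrafilters suggests the intended route is an ultrapower ${^*\mathbb R}=\mathbb R^I/\!\sim_{\mathcal U}$ with $\card I=\kappa$ and $\mathcal U$ a $\kappa_+$-good ultrafilter. This field is real closed by transfer, and its cardinality is $\frak c^\kappa=2^\kappa=\kappa_+$ under GCH. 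I would only mention this as a remark, since it needs GCH, while the first construction needs only ZFC.

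The main obstacle is the cardinality bookkeeping: checking that neither the algebraic closure nor the real closure increases cardinality beyond $\kappa_+$. I would handle this by counting polynomials with finitely many coefficients from a set of size $\kappa_+$, each having finitely many roots, which gives at most $\kappa_+\cdot\aleph_0=\kappa_+$ elements. The reverse inequality $\geq\kappa_+$ holds because $\mathcal B$ is contained in the field. The final sentence of the statement, that $\mathbb C_\kappa$ exists for every infinite $\kappa$, then follows from Definition~\ref{D: kappa-Complex Numbers}.
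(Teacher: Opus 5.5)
Your proof is correct. The uniqueness half is the same as the paper's: $\kappa_+$ is uncountable, so Corollary~\ref{C: Uncountable Cardinality} applies.

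The existence half takes a genuinely different route. The paper shows $\mathbb R$ is real closed and passes to an ultrapower ${^*\mathbb R}=\mathbb R^I/\!\sim_{\mathcal V}$, with $\card I=\kappa$ and $\mathcal V$ a $\kappa_+$-good free ultrafilter. It gets real closedness by transfer, asserts $\card({^*\mathbb R})=\kappa_+$, and applies Artin--Schreier to ${^*\mathbb R}(i)$. You instead take an algebraic closure of the purely transcendental field $\mathbb Q(\mathcal B)$ with $\card\,\mathcal B=\kappa_+$, and count elements directly.

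Your argument is elementary and uses only ZFC. It needs neither the existence of $\kappa_+$-good ultrafilters (a nontrivial theorem of Keisler and Kunen), nor transfer, nor GCH. The paper's cardinality step tacitly relies on $\card(\mathbb R^I/\!\sim_{\mathcal V})=\mathfrak c^\kappa=2^\kappa$ together with GCH from Section~\ref{S: Set-Theoretical Framework}, as your closing remark correctly notes. In that remark, only the bound $\le\mathfrak c^\kappa$ is trivial; equality uses the fact that countably incomplete $\kappa_+$-good ultrafilters are regular.

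Your route also shows that the successor form of the cardinal plays no role. The same construction gives existence in every infinite cardinality, and with Corollary~\ref{C: Uncountable Cardinality} it gives uniqueness in every uncountable one.

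What the paper's route buys is a particular model rather than the theorem itself. It realizes $\mathbb C_\kappa$ as ${^*\mathbb R}(i)$ with ${^*\mathbb R}$ a $\kappa_+$-saturated non-Archimedean real closed field. The paper reuses this in the next theorem and in the topological fields $(\mathbb C_\kappa,|\cdot|_{\sigma[{^*\mathbb R}]})$ of Section~\ref{S: Infinitesimals in Analysis}. Your real-closure variant also yields a real closed $\mathcal R$ with $\mathcal R(i)\cong\mathbb C_\kappa$, but without saturation.
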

 \begin{proof} $\mathbb R$ is a real closed field, because every polynomial with real coefficients is a continuous function on $\mathbb R$ hence, if the degree of such a polynomial is odd, its graph must intersect the $x$-axis at least once. Thus $\mathbb R(i)$ is an algebraically closed field of zero characteristic by Artin-Schreier~\cite{ArtinSchreier27}  theorem - the so called fundamental theorem of algebra. Next, let $I$ is a set of cardinality $\kappa$ and $\mathcal V$ be a $\kappa_+$-good free ultrafilter on $I$.  Let  $\mathbb R^I/\!\sim_\mathcal V$ stand for the corresponding non-standard extension of $\mathbb R$. Thus (whether $\kappa=\aleph_0$ or $\kappa>\aleph_0$)  the field $\mathbb R^I/\!\sim_\mathcal V$ is real closed by transfer principle (Davis~\cite{mDavis}, p.28), since $\mathbb R$ is real closed and $\mathbb R^I/\!\sim_\mathcal V$ is a non-standard extension of $\mathbb R$. Also, $\mathbb R^I/\!\sim_\mathcal V$ is of cardinality $\kappa_+$, because the ultrafilter  $\mathcal V$ is $\kappa$-good by assumption.   It is customary to use the simpler notation $^*\mathbb R$ (along with  $\card(^*\mathbb R)=\kappa_+$) instead of  $\mathbb R^I/\!\sim_\mathcal V$. Using this notation,  $^*\mathbb R(i)$ must be algebraically closed field of zero characteristic by Artin-Schreier theorem and $^*\mathbb R(i)$ is of cardinality $\kappa_+$, since $\card(^*\mathbb R)=\kappa_+$. These properties of  $^*\mathbb R(i)$ determines $^*\mathbb R(i)$ uniquely, up to isomorphism, by Corollary~\ref{C: Uncountable Cardinality} applied for characteristic zero. In particular, $^*\mathbb R(i)$ is isomorphic to $\mathbb C_\kappa$ proving the existence of $\mathbb C_\kappa$.
 \end{proof}
 \begin{remark}[Countable Fields]\label{R: Countable Fields} Let $\mathbb A(i)$ denote the field of the algebraic numbers in $\mathbb C$ (here $\mathbb A$ stands for the field of the real algebraic numbers). The field $\mathbb A(i)$ is algebraically closed, of zero characteristic and $\card(\mathbb A(i))=\aleph_0$. Notice however, that the countable fields are excluded in the above theorem; the reason is that they do not satisfy the condition for ``uniqueness up to isomorphism''; two countable algebraically closed fields of zero characteristic might or might not be isomorphic.
 \end{remark}
 \begin{theorem} Each of the fields $\mathbb C_\kappa$ contains non-zero infinitesimals in the sense that $\mathbb C_\kappa$ contains a non-Archimedean subfields. 
  \end{theorem}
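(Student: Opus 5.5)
The plan mirrors the proof of Theorem~\ref{T: Main Result}, with $\frak c$ replaced by $\kappa_+$. It needs three ingredients: a non-Archimedean real closed field $\mathcal R_\kappa$ with $\card(\mathcal R_\kappa)=\kappa_+$; an isomorphism $\sigma:\mathcal R_\kappa(i)\to\mathbb C_\kappa$; and the observation that $\sigma$ carries infinitesimals of $\mathcal R_\kappa$ to infinitesimals of the subfield $\sigma[\mathcal R_\kappa]$.

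\emph{Step 1: the field $\mathcal R_\kappa$.} Take the field constructed in the proof of Theorem~\ref{T: Generalized Fundamental Theorem of Algebra}, namely ${^*\mathbb R}=\mathbb R^I/\!\sim_\mathcal V$ with $\card(I)=\kappa$ and $\mathcal V$ a $\kappa_+$-good free ultrafilter on $I$. That proof already shows ${^*\mathbb R}$ is real closed of cardinality $\kappa_+$, so only non-Archimedeanity remains.
\begin{itemize}
\item Since $\mathcal V$ is free and countably incomplete (which goodness provides), there is a decreasing sequence $I=I_0\supseteq I_1\supseteq\cdots$ of sets in $\mathcal V$ with $\bigcap_n I_n=\varnothing$.
\item Define $f(j)=\min\{n: j\notin I_{n+1}\}$ for $j\in I$.
\item Then $\bra f(j)\ket$ exceeds every $m\in\mathbb N$, because $\{j: f(j)>m\}\supseteq I_{m+1}\in\mathcal V$.
\item Hence $1/\bra f(j)\ket$ is a positive infinitesimal.
\end{itemize}
As a fallback that avoids ultrafilter combinatorics, one can instead take the real closure of ${^*\mathbb R}(t)$, ordered with $t$ a positive infinitesimal, or the Puiseux field ${^*\mathbb R}\{t\}$ as in Examples~\ref{Exs: Non-Archimedean Real Closed Fields of Cardinality c}. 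Either field is non-Archimedean because $t$ is infinitesimal. Its cardinality is still $\kappa_+$, by formula (\ref{E: card K}) and the fact that an algebraic extension does not increase an infinite cardinality. For ${^*\mathbb R}\{t\}$ one needs $(\kappa_+)^{\aleph_0}=\kappa_+$; this holds under GCH (assumed in Section~\ref{S: Set-Theoretical Framework}) when $\kappa_+$ has uncountable cofinality, which is automatic for a successor cardinal. The real-closure option avoids even this.

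\emph{Step 2: transport into $\mathbb C_\kappa$.} Since $\kappa_+>\aleph_0$, Corollary~\ref{C: Two Fields}, or directly Corollary~\ref{C: Uncountable Cardinality} together with Artin--Schreier, gives an isomorphism $\sigma:\mathcal R_\kappa(i)\to\mathbb C_\kappa$. Both fields are algebraically closed of characteristic zero and cardinality $\kappa_+$. Then $\sigma[\mathcal R_\kappa]$ is a real closed subfield of $\mathbb C_\kappa$. Its order is the unique one given by squares, so $\sigma$ is order preserving. It also fixes $\mathbb Q$ pointwise. Therefore, if $0<x<1/n$ for all $n$ in $\mathcal R_\kappa$, then $0<\sigma(x)<1/n$ for all $n$ in $\sigma[\mathcal R_\kappa]$. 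So $\sigma[\mathcal R_\kappa]$ is a non-Archimedean subfield of $\mathbb C_\kappa$ containing the non-zero infinitesimal $\sigma(x)$.

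The main obstacle is Step 1: rigorously certifying that ${^*\mathbb R}$ is non-Archimedean when $I$ is uncountable, which needs countable incompleteness of $\mathcal V$. I would first try the countable-incompleteness argument above. If it is unclear which properties of good ultrafilters may be assumed, I would switch to the real closure of ${^*\mathbb R}(t)$, where non-Archimedeanity is immediate and only the cardinality count needs checking.
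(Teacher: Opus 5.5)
Your proof is correct, but it is organized differently from the paper's. The paper splits into two cases. For $\kappa=\aleph_0$ it notes $\mathbb C_{\aleph_0}\cong\mathbb C$ (using CH, $\aleph_1=\frak c$) and invokes Theorem~\ref{T: Main Result}. For $\kappa>\aleph_0$ it uses a pure cardinality argument: under CH, $\kappa_+\geq\aleph_2>\frak c$, and an Archimedean ordered field embeds in $\mathbb R$. Hence \emph{every} real closed field of cardinality $\kappa_+$ is automatically non-Archimedean, and any isomorphism $\mathcal R(i)\to\mathbb C_\kappa$ finishes the job. No infinitesimal is ever exhibited.

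Your route instead produces an explicit infinitesimal, via countable incompleteness of $\mathcal V$ or by adjoining $t$ and taking a real closure. You then transport it exactly as in Theorem~\ref{T: Main Result}, uniformly in $\kappa$. The step you flag as the ``main obstacle'' (non-Archimedeanity of ${^*\mathbb R}$ for uncountable $I$) is exactly what the paper sidesteps. Once $\card({^*\mathbb R})=\kappa_+$ is taken from Theorem~\ref{T: Generalized Fundamental Theorem of Algebra}, it is a one-line consequence for $\kappa>\aleph_0$. Indeed, that cardinality assertion already presupposes that $\mathcal V$ is countably incomplete, since an ultrapower of $\mathbb R$ by a countably complete ultrafilter is just a copy of $\mathbb R$.

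What your approach buys is independence from the inequality $\kappa_+>\frak c$ (hence from CH at this step) and from the case split. Your fallback via the real closure of ${^*\mathbb R}(t)$ also does not depend on which properties of good ultrafilters one may assume. What the paper's approach buys is brevity, plus the stronger conclusion that for $\kappa>\aleph_0$ every real closed field of cardinality $\kappa_+$ supplies $\mathbb C_\kappa$ with infinitesimals.
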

 \begin{proof} We already know that $\mathbb C_\kappa$ contains non-zero infinitesimals in the case $\kappa=\aleph_0$, since $\mathbb C_{\aleph_0}$ is isomorphic to $\mathbb C$ (Section~\ref{S: Infinitesimals in Algebra}). In the case $\kappa>\aleph_0$ we observe that $\mathbb C_\kappa$ is isomorphic to $\mathcal R(i)$ for every real closed field $\mathcal R$ of cardinality $\kappa_+$. Hence, $\mathcal R$ must be non-Archimedean as totally ordered field  of cardinality greater than $\frak c$. Thus $\mathcal R$ can be embedded as a subfield of $\mathbb C_\kappa$ supplying  $\mathbb C_\kappa$ with non-zero infinitesimals.
\end{proof}
 \begin{remark}[Warning about Notation]\label{R: Warning about Notation} The asterisk notation (like any notation in mathematics) is less than perfect. For example, the fields $\mathbb R^\mathbb N/\!\!\sim_\mathcal U$ and  $\mathbb R^I\!\!/\sim_\mathcal V$ are certainly non-isomorphic in the case $\card(I)>\aleph_0$. Indeed, we have   $\card(\mathbb R^\mathbb N/\!\!\sim_\mathcal U)=\frak c$, while $\card(\mathbb R^I/\!\!\sim_\mathcal V)=\kappa_+>\frak c$, where $\kappa=\card(I)$. Still we usually use the same notation, $^*\mathbb R$ for both. The same hold for $^*\mathbb C$ and the corresponding fields of asymptotic numbers $^\rho\mathbb R$ and $^\rho\mathbb C$. For example, we have $\card(^*\mathbb C)=\card(^\rho\mathbb C)=\frak c$ in (Oberguggenberger \& Todorov~\cite{OberTod98}) and $\card(^*\mathbb C)=\card(^\rho\mathbb C)=\frak c_+$ in (Todorov \& Vernaeve~\cite{TodVern08}). To avoid misunderstanding, the readers is adviced to pay attention to the choice of the saturation (hence, cardinality) of $^*\mathbb R$ usually specified at the very beginning of the text under reading.
  \end{remark}
 \section{Infinitesimals in Analysis}\label{S: Infinitesimals in Analysis}
 
 	\quad\;So far we discuss the infinitesimals in $\mathbb C$ (Definition~\ref{D: Abstract Definition of C}) and  infinitesimals in $\mathbb C_\kappa$ (Section~\ref{S: kappa-Complex Numbers}) mostly from algebraic point of view. In our last section we supply $\mathbb C_\kappa$ hence, also $\mathbb C$, with different, non-homeomorphic in general, topologies and shortly discuss standard, non-standard and non-standard asymptotic analysis in comparison.
 \begin{definition}[Topology on $\mathbb C_\kappa$]\label{D: Topology on C_kappa}  Let $\kappa$ be an infinite cardinal and $\mathbb C_\kappa$ stand for the field of $\kappa$-compex numbers (Definition~\ref{D: kappa-Complex Numbers}). Let $\mathcal R$ be  a real closed field of cardinality $\kappa_+$ and $\sigma: \mathcal R(i)\mapsto\mathbb C_\kappa$ stand for a field isomorphism (Section~\ref{S: kappa-Complex Numbers}).
  \begin{D-enum}
 
 \item We define the absolute value $|\cdot|_{\sigma[\mathcal R]}: \mathbb C_\kappa\mapsto\mathbb C_\kappa$, by $|\sigma(x+iy|_{\sigma[\mathcal R]}=\sigma(\sqrt{x^2+y^2})$ for all $x, y\in\mathcal R$ and supply $\mathbb C_\kappa$ with the topology generated by $|\cdot|_{\sigma[\mathcal R]}$. We denote the corresponding topological field by $(\mathbb C_\kappa,\, {|\!\cdot\!|}_{\sigma[\mathcal R]})$.
 \item In the particular case $\kappa=\aleph_0$ we shall write simply, $(\mathbb C,\, {|\!\cdot\!|}_{\sigma[\mathcal R]})$, instead of $(\mathbb C_{\aleph_0},\, {|\!\cdot\!|}_{\sigma[\mathcal R]})$
 (since $\mathbb C_{\aleph_0}$ is isomorphic to $\mathbb C$ and the latter determines $\mathbb C$ uniquely up to isomorphism - Definition~\ref{D: Abstract Definition of C}). 
 \item We say that $z\in\mathbb C_\kappa$ (in particular, $z\in\mathbb C$) is an \emph{infinitesimal relative to the absolute value} $|\cdot|_{\sigma[\mathcal R]}$\;  if $0\leq |z|_{\sigma[\mathcal R]}< 1/n$ for all $n\in\mathbb N$. Similarly, $z$ \emph{is finite relative to} $|\cdot|_{\sigma[\mathcal R]}$\; if $|z|_{\sigma[\mathcal R]}\leq n$ for some $n\in\mathbb N$ and $z$ \emph{is infinitely large relative to} $|\cdot|_{\sigma[\mathcal R]}$\; if $n<|z|_{\sigma[\mathcal R]}$ for all $n\in\mathbb N$.
\end{D-enum}
 \end{definition}
 	The absolute value $|\cdot|_\mathcal R$ determines on $\mathbb C_\kappa$  the product topology  inherited from the order topology on $\sigma[\mathcal R]$. In addition, $|\cdot|_{\sigma[\mathcal R]}$ can be used for localizing the infinitesimals within $\mathbb C$. 
 \begin{lemma}[Infinitesimals Relative to the Absolute Value] $\mathbb C_\kappa$ is free of non-zero infinitesimals relative to $|\cdot|_{\sigma[\mathcal R]}$  if and only if $\mathcal R$ is an Archimedean field if and only if $\sigma[\mathcal R]$ is an Archimedean field  if and only if $\kappa=\aleph_0$. 
  \end{lemma}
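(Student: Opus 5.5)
The plan is to prove the equivalences as a cycle, going through the order on $\mathcal R$ and transporting it to $\sigma[\mathcal R]$ along $\sigma$.

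\emph{Step 1: $\mathbb C_\kappa$ has no non-zero infinitesimals relative to $|\cdot|_{\sigma[\mathcal R]}$ if and only if $\mathcal R$ is Archimedean.} I will use Definition~\ref{D: Topology on C_kappa}. Write $z=\sigma(x+iy)$ and use $\sigma(q)=q$ for $q\in\mathbb Q$ (Corollary~\ref{C: Two Fields}). Then $z$ is infinitesimal relative to $|\cdot|_{\sigma[\mathcal R]}$ exactly when $\sqrt{x^2+y^2}<1/n$ in $\mathcal R$ for all $n\in\mathbb N$. For the forward direction, suppose $\mathcal R$ is non-Archimedean and take a positive infinitesimal $x\in\mathcal R$. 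Then $\sigma(x)\neq 0$ and $|\sigma(x)|_{\sigma[\mathcal R]}=\sigma(x)$ is infinitesimal. For the converse, suppose $z\neq0$ is infinitesimal. Then $\sqrt{x^2+y^2}$ is a non-zero infinitesimal of $\mathcal R$, so $\mathcal R$ is non-Archimedean.

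\emph{Step 2: $\mathcal R$ is Archimedean if and only if $\sigma[\mathcal R]$ is.} As in the proof of Theorem~\ref{T: Main Result}, the order of a real closed field is definable from its field operations ($x\ge 0$ iff $x=y^2$). Hence $\sigma|_{\mathcal R}$ is an order isomorphism that fixes every $1/n$, so it maps infinitesimals bijectively onto infinitesimals.

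\emph{Step 3: if $\mathcal R$ is Archimedean, then $\kappa=\aleph_0$.} An Archimedean ordered field embeds into $\mathbb R$ (item 6 of Section~\ref{S: Preliminaries: Ordered Fields}), so $\kappa_+=\card(\mathcal R)\le\frak c$. Since $\kappa$ is infinite, $\kappa_+\ge\aleph_1$, and under CH $\frak c=\aleph_1$ (Section~\ref{S: Set-Theoretical Framework}). This forces $\kappa_+=\aleph_1$, i.e.\ $\kappa=\aleph_0$. This is the same cardinality argument the paper uses in the theorem on non-zero infinitesimals in $\mathbb C_\kappa$.

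\emph{Step 4: the converse $\kappa=\aleph_0\Rightarrow\mathcal R$ Archimedean.} I expect this to be the main obstacle, and it cannot be proved for arbitrary data. By Theorem~\ref{T: Main Result} and Corollary~\ref{C: Two Fields}, with $\kappa=\aleph_0$ one may take $\mathcal R=\mathbb R\{t\}$, a non-Archimedean real closed field of cardinality $\frak c$, together with an isomorphism $\sigma:\mathcal R(i)\to\mathbb C$. Then $\mathbb C$ does contain non-zero infinitesimals relative to $|\cdot|_{\sigma[\mathcal R]}$. So the last equivalence, read literally for every $\mathcal R$ and $\sigma$, is refuted by the paper's own main result.

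What Steps 1--3 establish is the following: the first three conditions are equivalent, and each of them implies $\kappa=\aleph_0$. Equivalently, for $\kappa>\aleph_0$, every choice of $\mathcal R$ and $\sigma$ produces non-zero infinitesimals. I would state explicitly in the write-up that the reverse implication from $\kappa=\aleph_0$ does not hold, and cite $\mathcal R=\mathbb R\{t\}$ as the counterexample.
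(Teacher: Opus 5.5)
Your Steps 1--3 follow the paper's own argument. The paper also observes that $\ran(|\cdot|_{\sigma[\mathcal R]})$ is the non-negative part of $\sigma[\mathcal R]$, and that $\sigma$ carries over the algebraically determined order while fixing every $1/n$. Your Step 3 spells out the only valid content of the paper's closing justification ``$\card(\mathbb C_\kappa)=\aleph_1=\frak c=\card(\mathcal R)$'': an Archimedean field embeds in $\mathbb R$, so $\kappa_+\le\frak c=\aleph_1$, and CH is genuinely needed for this. One minor point: Step 1 already uses the order-preservation that you prove in Step 2, so put Step 2 first.

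Your Step 4 is also correct, and here you rightly depart from the paper. The paper asserts $\kappa=\aleph_0\Rightarrow\mathcal R$ Archimedean from that same cardinality equation. But having cardinality $\frak c$ does not force the Archimedean property. The paper's own Theorem~\ref{T: Main Result} with $\mathcal R=\mathbb R\{t\}$, and item 3 of the example right after the lemma, exhibit non-zero infinitesimals in $\mathbb C=\mathbb C_{\aleph_0}$ relative to $|\cdot|_{\sigma[\mathcal R]}$. So for a fixed pair $(\mathcal R,\sigma)$ the last ``if and only if'' is false. What actually holds is what you state: the first three conditions are equivalent, and each implies $\kappa=\aleph_0$. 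If you want a true equivalence with $\kappa=\aleph_0$, quantify over the choices: some $(\mathcal R,\sigma)$ leaves $\mathbb C_\kappa$ free of non-zero infinitesimals relative to $|\cdot|_{\sigma[\mathcal R]}$ if and only if $\kappa=\aleph_0$. For the reverse direction take $\mathcal R=\mathbb R$, which has cardinality $\frak c=\aleph_1$.
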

 \begin{proof}For the range of the absolute value we have $\ran(|\cdot|_{\sigma[\mathcal R]})=\{x\in\sigma[\mathcal R]: x\geq 0\}$. Thus $\ran(|\cdot|_{\sigma[\mathcal R]})$ is free of positive infinitesimals \ifff  $\sigma[\mathcal R]$ is free of non-zero infinitesimals \ifff  $\sigma[\mathcal R]$
 is an Archimedean field \ifff $\mathcal R$ is an Archimedean field. Finally, the latter is equivalent to $\kappa=\aleph_0$, since $\card(\mathbb C_\kappa)=\aleph_1=\frak c=\card(\mathcal R)$ (Section~\ref{S: Set-Theoretical Framework}). 

 \end{proof}
 \begin{example} We go through the examples in Examples~\ref{Exs: Non-Archimedean Real Closed Fields of Cardinality c} and their counterparts in Section~\ref{S: kappa-Complex Numbers}.  
 \begin{enumerate}
  \item $\mathbb C$  is \emph{free of non-zero infinitesimals} (hence, free of infinitely large elements) \emph{relative to} $|\cdot|_{\sigma[\mathcal R]}$ in the particular cases $\mathcal R=\mathbb R$ and $\mathcal R=\mathbb R_\alpha$, where $\alpha$ is a transcendental real number (Examples~\ref{Exs: Non-Archimedean Real Closed Fields of Cardinality c}).
 \item  $(\mathbb C,\; {|\!\cdot\!|}_{\sigma[\mathbb R]})$, commonly denoted, simply by $\mathbb C$ or $\mathbb R(i)$, is the only topological field among those in the form $(\mathbb C,\, {|\!\cdot\!|}_{\sigma[\mathcal R]})$, which is a Banach algebra. The latter follows from the fact that $\mathbb R$ hence, $\sigma[\mathbb R]$, is Dedekind complete ($\sup$-complete).  
 \item Let $\mathcal R$ be any of the  real closed fields in Examples~\ref{Exs: Non-Archimedean Real Closed Fields of Cardinality c} except the fields $\mathcal R=\mathbb R$ and $\mathcal R=\mathbb R_\alpha$ (mentioned above). Then $\mathbb C$  \emph{does contain non-zero infinitesimals} (hence, infinitely large elements) \emph{relative to} $|\!\cdot\!|_{\sigma[\mathcal R]}$ (the fields $\mathcal R=\mathbb R$ and $\mathcal R=\mathbb R_\alpha$ are the only Archimedean fields in Examples~\ref{Exs: Non-Archimedean Real Closed Fields of Cardinality c}).
 \item Let $^*\mathbb R$ be the non-standard extension of $\mathbb R$ of cardinality $\frak c$ (Examples~\ref{Exs: Non-Archimedean Real Closed Fields of Cardinality c}).  The topological field $(\mathbb C,\, {|\!\cdot\!|}_{\sigma[^*\mathbb R]})$ is commonly denoted by $^*\mathbb C$ with the restriction $\card(^*\mathbb C)=\frak c$ or simply, by ${^*\mathbb R}(i)$, where $\card(^*\mathbb R)=\frak c$. It is the only topological field among those in Examples~\ref{Exs: Non-Archimedean Real Closed Fields of Cardinality c}, which is $\frak c$-saturated in the sense that every nested sequence \emph{of open disks} (more generally, \emph{of internal sets}) of $\mathbb C$ relative to $ {|\!\cdot\!|}_{\sigma[^*\mathbb R]}$ has non-empty intersection (Lindstr\o m~\cite{tLindstrom}, Loeb~\cite{LoebWolff}).  More generally, let $\mathbb C_\kappa$ be a field of $\kappa$-complex numbers of cardinality $\kappa_+$  (Definition~\ref{D: kappa-Complex Numbers}). Then the topological field $(\mathbb C_\kappa,\, {|\!\cdot\!|}_{\sigma[^*\mathbb R]})$, is commonly denoted also by $^*\mathbb C$ with the restriction $\card(^*\mathbb C)=\kappa_+$ or simply, by ${^*\mathbb R}(i)$, where $\card(^*\mathbb R)=\kappa_+$ (Remark~\ref{R: Warning about Notation}). The field $^*\mathbb C=(\mathbb C_\kappa,\, {|\!\cdot\!|}_{\sigma[^*\mathbb R]})$ is $\kappa_+$-saturated in the sense that  every family of cardinality $\kappa$ \emph{of open disks} (and more generally, \emph{of internal sets}) of $\mathbb C_\kappa$ relative to $ {|\!\cdot\!|}_{\sigma[^*\mathbb R]}$ with the finite intersection property has non-empty intersection (Lindstr\o m~\cite{tLindstrom}, Loeb~\cite{LoebWolff}). 
 \item Let $^\rho\mathbb R$ stand for Robinson's field of asymptotic numbers of cardinality $\frak c$ (Examples~\ref{Exs: Non-Archimedean Real Closed Fields of Cardinality c}).  The topological field $(\mathbb C,\, {|\!\cdot\!|}_{\sigma[^\rho\mathbb R]})$ is commonly denoted by $^\rho\mathbb C$ with the restriction $\card(^\rho\mathbb C)=\frak c$ or equivalently, by ${^\rho\mathbb R}(i)$, where $\card(^\rho\mathbb R)=\frak c$. The field $^\rho\mathbb C$ is non-saturated, but it is Cantor complete in the sense that every  nested sequence of closed  disks of $\mathbb C$ relative to ${|\!\cdot\!|}_{\sigma[^\rho\mathbb R]}$ has non-empty intersection (Todorov \&Wolf~\cite{TodWolf04}). Similarly, the topological field $(\mathbb C_\kappa,\, {|\!\cdot\!|}_{\sigma[^\rho\mathbb R]})$ is commonly denoted  also by $^\rho\mathbb C$ with the restriction $\card(^\rho\mathbb C)=\kappa_+$ or equivalently, by ${^\rho\mathbb R}(i)$, where $\card(^\rho\mathbb R)=\kappa_+$. As before, $^\rho\mathbb C$ is Cantor complete, but non-saturated.
 \item As we already know, from purely algebraic point of view we have the isomorphism ${\mathbb C_\kappa}\cong{^*\mathbb C}\cong{^\rho\mathbb C}$ in the case $\card(^*\mathbb C)=\card(^\rho\mathbb C)=\kappa_+$. In particular, ${\mathbb C}\cong{^*\mathbb C}\cong{^\rho\mathbb C}$ in the case $\card(^*\mathbb C)=\card(^\rho\mathbb C)=\frak c$. In contrast, the topological fields  $(\mathbb C,\; {|\!\cdot\!|}_{\sigma[\mathbb R]})$, $(\mathbb C,\, {|\!\cdot\!|}_{\sigma[^*\mathbb R]})$ and $(\mathbb C,\, {|\!\cdot\!|}_{\sigma[^\rho\mathbb R]})$ are \emph{pairwise non-homeomorphic}. Indeed, $(\mathbb C,\; {|\!\cdot\!|}_{\sigma[\mathbb R]})$ is Dedekind complete (sup-complete) unlike the other two, which are not (because they both contain non-zero infinitesimals relative to their absolute values). On the other hand, the last two fields are also non-homeomorphic, because $(\mathbb C,\, {|\!\cdot\!|}_{\sigma[^*\mathbb R]})$ is saturated unlike $(\mathbb C,\, {|\!\cdot\!|}_{\sigma[^\rho\mathbb R]})$, which is non-saturated. Similarly, the topological fields $(\mathbb C_\kappa,\, {|\!\cdot\!|}_{\sigma[^*\mathbb R]})$  and $(\mathbb C_\kappa,\, {|\!\cdot\!|}_{\sigma[^\rho\mathbb R]})$ are non-homeomorphic, because the first is saturated and the second is not.  These topological fields behave quite differently in complex and functional analysis, which is the topic of the next remark. 
 \end{enumerate}
 \end{example}
 \begin{remark}[Standard vs. Non-Standard  Analysis]\label{R: Standard vs. Non-Standard  Analysis}
 \begin{enumerate}
 \item As we know, the topological field $(\mathbb C,\, {|\!\cdot\!|}_{\sigma[\mathbb R]})$ is the common framework of (standard) \emph{complex and functional analysis}. Its success is due to the fact that $\mathbb R$ is Dedekind complete hence, $\mathbb C$  is a Banach algebra.
 \item The field ${^*\mathbb C}=(\mathbb C,\, {|\!\cdot\!|}_{\sigma[^*\mathbb R]})$ and its ``larger'' counterpart ${^*\mathbb C}=(\mathbb C_\kappa,\, {|\!\cdot\!|}_{\sigma[^*\mathbb R]})$ are the framework of \emph{non-standard complex and non-standard functional analysis} (Davis~\cite{mDavis},    Lindstr\o m~\cite{tLindstrom}, Loeb~\cite{LoebWolff}).  Due to the Transfer Principle (Davis~\cite{mDavis}, Ch.1, Thm.7.3), every result of standard analysis (expressed in terms of the field $\mathbb C=(\mathbb C,\, {|\!\cdot\!|}_{\sigma[\mathbb R]})$) can be also expressed equivalently in the terms of the fields ${^*\mathbb C}=(\mathbb C,\, {|\!\cdot\!|}_{\sigma[^*\mathbb R]})$ or ${^*\mathbb C}=(\mathbb C_\kappa,\, {|\!\cdot\!|}_{\sigma[^*\mathbb R]})$, i.e. in the language of non-standard analysis. In addition, the non-standard version of  the statements \emph{are simpler} than their standard counterpart in the sense that they involve less quantifiers (Cavalcante~\cite{rCavalcante}). Here is a typical example: Let $c$ be an accumulating point of $D\subseteq\mathbb C$ and $f: D\mapsto\mathbb C$ be a complex-valued function. Let $L\in\mathbb C$. Then the following are equivalent:
 \begin{align}
  & (\forall dx\in\mathcal I(^*\mathbb R))(c+dx\in{^*\!D}\Rightarrow {^*\!f}(c+dx)-L\in\mathcal I(^*\mathbb R)),\label{F: Limit}\\
 & (\forall\varepsilon\in\mathbb R_+)(\exists\gamma\in\mathbb R_+)(\forall z\in D)(|z-c|<\gamma \Rightarrow | f(z)-L |<\varepsilon),\\
&\lim_{x\mapsto c}f(z)=L.
 \end{align}
 Here $\mathcal I(^*\mathbb R)$ stands for the set of infinitesimals in $^*\mathbb R$ and $^*\!D$ and $^*\!f$ are the non-standard extension of $D$ and $f$, respectively. 
 \item Readers with interest to history of calculus might notice that (\ref{F: Limit}) very much resembles a criterion for convergence in the old Leibniz/Euler infinitesimal calculus. This especially become obvious if we drop the asterisks in front of $^*\mathbb R, {^*\!D}$ and $f$ and verbalize the statement in language of 17-19 centuries mathematics. We should be not surprise that the calculus was first invented in terms of infinitesimals requiring a single quantifier (rather than three non-commuting ones). 

 \item A more recent example of taking advantage of the simplicity of non-standard methods offers the \emph{Bernstein--Robinson~\cite{BernsteinRobinson66} solution of the open invariant subspace problem for polynomially compact operators on Hilbert space} (for a presentation see also  Davis~\cite{mDavis}, Ch.5, \S3). A standard solution was produced a decay later (Lomonosov~\cite{vLomonosov}). 
 \item Another example of efficiency of non-standard methods comes from the recent \emph{solution of the Hilbert fifth problem for local groups} (Goldbring~\cite{iGoldbring}). As far as we can tell, a standard solution of this problem has not been produced so far.  As in the previous example, the reducing of the numbers of quantifiers (compared with those in the earlier attempts to solve the same problem by standard methods) plays a decisive role.  
 \item As we noted already every statement in standard analysis has an equivalent counterpart in non-standard analysis. The converse however, is not always true: there are results in non-standard analysis without counterpart in standard analysis. For example,   Dirac delta-functional  appears as a pointwise function $\delta: {^*\mathbb R}\mapsto{^*\mathbb C}$ within an algebra of generalized function of Colombeau type (Vernaeve~\cite{HVern03}). The latter is impossible in standard analysis, since $\delta$  takes infinitely large values for some infinitesimal numbers in $^*\mathbb R$. Similar result appears in non-standard asymptotic analysis discussed below. Let us recall that an associative commutative differential algebra $G$ is called \emph{ algebras of generalized  functions of Colombeau type} if $G$ contains a copy of Schwartz distributions $\mathcal D^\prime$ (as a differential vector subspace) and such that the product in $G$ reduces to the usual point-wise product on the ring of $C^\infty$-functions in $\mathcal D^\prime$ (Colombeau~\cite{jCol84a}). Every such algebra \emph{solves the problem of multiplication of Schwartz distributions}, since $\mathcal D^\prime$ itself does not admit associative product (Schwartz~\cite{lSchwartz54}). 
 \item The field ${^\rho\mathbb C}=(\mathbb C,\, {|\!\cdot\!|}_{\sigma[^\rho\mathbb R]})$ and its ``larger'' counterpart ${^\rho\mathbb C}=(\mathbb C_\kappa,\, {|\!\cdot\!|}_{\sigma[^\rho\mathbb R]})$ are the  framework of \emph{non-standard asymptotic analysis} (Robinson~\cite{aRob73}, Lightstone \& Robinson~\cite{LiRob}). As mentioned already, the field  $^\rho\mathbb C$ had beed used in the role of \emph{fields of scalars of algebras of generalized  functions of Colombeau type} (Oberguggenberger \&Todorov~\cite{OberTod98}, Todorov \&Vernaeve~\cite{TodVern08},  Todorov~\cite{tdTodAxioms10}-\cite{tdTodorov2024}). 
 \end{enumerate}
 \end{remark}
\begin{remark}[Summary]\label{R: Summary}
We show that even the familiar (and widely popular) field of complex numbers $\mathbb C$ contains non-zero infinitesimals. We try to convince the reader that infinitesimals are not only unavoidable in modern mathematics, but they are, actually,  very useful for simplifying mathematical reasoning by reducing the numbers of quantifiers compared with those in standard mathematics; just like infinitesimals were spectacularly successful (and perhaps, unavoidable) in creation of calculus long time ago. The simplicity of non-standard methods is especially noticeable in the presence of plenty of infinitesimals - the so called \emph{saturation}.
\end{remark}
\noindent \textbf{Acknowledgement}: The author is grateful to Ivan Penkov for the numerous fruitful discussions on the topic during the preparation of the manuscript.\\

\noindent\textbf{Declaration of competing interest:} The author declares that he has no known competing financial interests or personal relationships that could influence the work on this article. \\
\noindent\textbf{Declaration of funding:} The author declares that the work on this article has not been supported financially by any organization, including grants. No funding was received. 

\end{document}